\title[Pareto Record Frontier:\ Localization of the Trailing Point]
{Sharpened Localization of the Trailing Point of the Pareto Record Frontier}
\newcommand\urladdrx[1]{{\urladdr{\def~{{\tiny$\sim$}}#1}}}
\author{James Allen Fill}
\address{Department of Applied Mathematics and Statistics,
The Johns Hopkins University,
3400 N.~Charles Street,
Baltimore, MD 21218-2682 USA}
\email{jimfill@jhu.edu}
\thanks{Research for all authors supported by
the Acheson~J.~Duncan Fund for the Advancement of Research in
Statistics.}
\author{Daniel~Q.\ Naiman}
\address{Department of Applied Mathematics and Statistics,
The Johns Hopkins University,
3400 N.~Charles Street,
Baltimore, MD 21218-2682 USA}
\email{daniel.naiman@jhu.edu}
\author{Ao Sun}
\address{Department of Applied Mathematics and Statistics, 
The Johns Hopkins University,
3400 N.~Charles Street,
Baltimore, MD 21218-2682 USA}
\email{asun17@jhu.edu}
\keywords{Multivariate records, Pareto records, generators, interior generators, minima, maxima, record-setting region, frontier, current records, boundary-crossing probabilities, first moment method, second moment method, orthants}
\subjclass[2020]{Primary:\ 60D05; Secondary:\ 60F05, 60F15, 60G70, 60G17}
\numberwithin{equation}{section}
\renewcommand\le{\leqslant}
\renewcommand\ge{\geqslant}
\theoremstyle{plain}
\newtheorem{theorem}{Theorem}[section]
\newtheorem{lemma}[theorem]{Lemma}
\newtheorem{proposition}[theorem]{Proposition}
\newtheorem{corollary}[theorem]{Corollary}
\theoremstyle{definition}
\newtheorem{example}[theorem]{Example}
\newtheorem{definition}[theorem]{Definition}
\newtheorem{remark}[theorem]{Remark}
\theoremstyle{remark}
\newenvironment{romenumerate}[1][-10pt]{
\addtolength{\leftmargini}{#1}\begin{enumerate}
 \renewcommand{\labelenumi}{\textup{(\roman{enumi})}}%
 \renewcommand{\theenumi}{\textup{(\roman{enumi})}}%
 }{\end{enumerate}}
\newcounter{oldenumi}
{\setcounter{oldenumi}{\value{enumi}}
\begin{romenumerate} \setcounter{enumi}{\value{oldenumi}}}
{\end{romenumerate}}
\newcounter{thmenumerate}
\newcounter{xenumerate}   
\newcommand\xfootnote[1]{\unskip\footnote{#1}$ $} 
\newcommand\pfitem[1]{\par(#1):}
\newcommand\pfitemx[1]{\par#1:}
\newcommand\pfitemref[1]{\pfitemx{\ref{#1}}}
\newcommand\pfcase[2]{\smallskip\noindent\emph{Case #1: #2} \noindent}
\newcommand\step[2]{\smallskip\noindent\emph{Step #1: #2} \noindent}
\newcommand\stepx{\smallskip\noindent\refstepcounter{steps}%
 \emph{Step \arabic{steps}:}\noindent}
\newcommand{\refT}[1]{Theorem~\ref{#1}}
\newcommand{\refC}[1]{Corollary~\ref{#1}}
\newcommand{\refL}[1]{Lemma~\ref{#1}}
\newcommand{\refR}[1]{Remark~\ref{#1}}
\newcommand{\refS}[1]{Section~\ref{#1}}
\newcommand{\refSS}[1]{Subsection~\ref{#1}}
\newcommand{\refP}[1]{Proposition~\ref{#1}}
\newcommand{\refD}[1]{Definition~\ref{#1}}
\newcommand{\refE}[1]{Example~\ref{#1}}
\newcommand{\refF}[1]{Figure~\ref{#1}}
\newcommand{\refApp}[1]{Appendix~\ref{#1}}
\newcommand{\refTab}[1]{Table~\ref{#1}}
\newcommand{\refand}[2]{\ref{#1} and~\ref{#2}}
\newcommand\marginal[1]{\marginpar{\raggedright\parindent=0pt\tiny #1}}
\newcommand\DN{\marginal{DN}}
\newcommand\JF{\marginal{JF 2023.09.30}}
\newcommand\kolla{\marginal{CHECK! SJ}}
\newcommand\ms[1]{\texttt{[ms #1]}}
\newcommand\XXX{XXX \marginal{XXX}}
\newcommand\REM[1]{{\raggedright\texttt{[#1]}\par\marginal{XXX}}}
\newcommand\rem[1]{{\texttt{[#1]}\marginal{XXX}}}
\newcommand\linebreakx{\unskip\marginal{$\backslash$linebreak}\linebreak}
\xdef\klockan{\the\count1.0\the\count255}
\xdef\klockan{\the\count1.\the\count255}\fi
\newcommand\nopf{\qed}   
\newcommand\noqed{\renewcommand{\qed}{}} 
\newcommand\qedtag{\eqno{\qed}}
\DeclareMathOperator*{\sumx}{\sum\nolimits^{*}}
\DeclareMathOperator*{\sumxx}{\sum\nolimits^{**}}
\newcommand{\sumio}{\sum_{i=0}^\infty}
\newcommand{\sumjo}{\sum_{j=0}^\infty}
\newcommand{\sumj}{\sum_{j=1}^\infty}
\newcommand{\sumko}{\sum_{k=0}^\infty}
\newcommand{\sumk}{\sum_{k=1}^\infty}
\newcommand{\summo}{\sum_{m=0}^\infty}
\newcommand{\sumno}{\sum_{n=0}^\infty}
\newcommand{\sumn}{\sum_{n=1}^\infty}
\newcommand{\sumin}{\sum_{i=1}^n}
\newcommand{\sumjn}{\sum_{j=1}^n}
\newcommand{\sumkn}{\sum_{k=1}^n}
\newcommand{\prodin}{\prod_{i=1}^n}
\newcommand{\sprod}{\mbox{$\prod$}}
\newcommand\set[1]{\ensuremath{\{#1\}}}
\newcommand\bigset[1]{\ensuremath{\bigl\{#1\bigr\}}}
\newcommand\Bigset[1]{\ensuremath{\Bigl\{#1\Bigr\}}}
\newcommand\biggset[1]{\ensuremath{\biggl\{#1\biggr\}}}
\newcommand\lrset[1]{\ensuremath{\left\{#1\right\}}}
\newcommand\xpar[1]{(#1)}
\newcommand\bigpar[1]{\bigl(#1\bigr)}
\newcommand\Bigpar[1]{\Bigl(#1\Bigr)}
\newcommand\biggpar[1]{\biggl(#1\biggr)}
\newcommand\lrpar[1]{\left(#1\right)}
\newcommand\bigsqpar[1]{\bigl[#1\bigr]}
\newcommand\Bigsqpar[1]{\Bigl[#1\Bigr]}
\newcommand\biggsqpar[1]{\biggl[#1\biggr]}
\newcommand\lrsqpar[1]{\left[#1\right]}
\newcommand\xcpar[1]{\{#1\}}
\newcommand\bigcpar[1]{\bigl\{#1\bigr\}}
\newcommand\Bigcpar[1]{\Bigl\{#1\Bigr\}}
\newcommand\biggcpar[1]{\biggl\{#1\biggr\}}
\newcommand\lrcpar[1]{\left\{#1\right\}}
\newcommand\abs[1]{|#1|}
\newcommand\bigabs[1]{\bigl|#1\bigr|}
\newcommand\Bigabs[1]{\Bigl|#1\Bigr|}
\newcommand\biggabs[1]{\biggl|#1\biggr|}
\newcommand\lrabs[1]{\left|#1\right|}
\def\rompar(#1){\textup(#1\textup)}    
\newcommand\xfrac[2]{#1/#2}
\newcommand\xpfrac[2]{(#1)/#2}
\newcommand\xqfrac[2]{#1/(#2)}
\newcommand\xpqfrac[2]{(#1)/(#2)}
\newcommand\parfrac[2]{\lrpar{\frac{#1}{#2}}}
\newcommand\bigparfrac[2]{\bigpar{\frac{#1}{#2}}}
\newcommand\Bigparfrac[2]{\Bigpar{\frac{#1}{#2}}}
\newcommand\biggparfrac[2]{\biggpar{\frac{#1}{#2}}}
\newcommand\xparfrac[2]{\xpar{\xfrac{#1}{#2}}}
\newcommand\innprod[1]{\langle#1\rangle}
\newcommand\expbig[1]{\exp\bigl(#1\bigr)}
\newcommand\expBig[1]{\exp\Bigl(#1\Bigr)}
\newcommand\explr[1]{\exp\left(#1\right)}
\newcommand\expQ[1]{e^{#1}}
\def\xexp(#1){e^{#1}}
\newcommand\ceil[1]{\lceil#1\rceil}
\newcommand\floor[1]{\lfloor#1\rfloor}
\newcommand\lrfloor[1]{\left\lfloor#1\right\rfloor}
\newcommand\frax[1]{\{#1\}}
\newcommand\setn{\set{1,\dots,n}}
\newcommand\nn{[n]}
\newcommand\ntoo{\ensuremath{{n\to\infty}}}
\newcommand\Ntoo{\ensuremath{{N\to\infty}}}
\newcommand\asntoo{\text{as }\ntoo}
\newcommand\ktoo{\ensuremath{{k\to\infty}}}
\newcommand\mtoo{\ensuremath{{m\to\infty}}}
\newcommand\stoo{\ensuremath{{s\to\infty}}}
\newcommand\ttoo{\ensuremath{{t\to\infty}}}
\newcommand\xtoo{\ensuremath{{x\to\infty}}}
\newcommand\bmin{\wedge}
\newcommand\norm[1]{\|#1\|}
\newcommand\bignorm[1]{\bigl\|#1\bigr\|}
\newcommand\Bignorm[1]{\Bigl\|#1\Bigr\|}
\newcommand\downto{\searrow}
\newcommand\upto{\nearrow}
\newcommand\half{\tfrac12}
\newcommand\thalf{\tfrac12}
\newcommand\punkt{.\spacefactor=1000}    
\newcommand\iid{i.i.d\punkt}
\newcommand\ie{i.e\punkt}
\newcommand\eg{e.g\punkt}
\newcommand\viz{viz\punkt}
\newcommand\cf{cf\punkt}
\newcommand{\as}{a.s\punkt}
\newcommand{\aex}{a.e\punkt}
\newcommand{\io}{i.o\punkt}
\renewcommand{\ae}{\vu}  
\newcommand\whp{w.h.p\punkt}
\renewcommand{\aa}{a.a\punkt}
\newcommand\ii{\mathrm{i}}
\newcommand{\tend}{\longrightarrow}
\newcommand\dto{\overset{\mathrm{d}}{\tend}}
\newcommand\pto{\overset{\mathrm{p}}{\tend}}
\newcommand\Pto{\overset{\mathrm{P}}{\tend}}
\newcommand\Lcto{\overset{\mathcal{L}}{\tend}}
\newcommand\asto{\overset{\mathrm{a.s.}}{\tend}}
\newcommand\eqd{\overset{\mathrm{d}}{=}}
\newcommand\neqd{\overset{\mathrm{d}}{\neq}}
\newcommand\op{o_{\mathrm p}}
\newcommand\Op{O_{\mathrm p}}
\newcommand\bbR{\mathbb R}
\newcommand\bbC{\mathbb C}
\newcommand\bbN{\mathbb N}
\newcommand\bbT{\mathbb T}
\newcommand\bbQ{\mathbb Q}
\newcommand\bbZ{\mathbb Z}
\newcommand\bbZleo{\mathbb Z_{\le0}}
\newcommand\bbZgeo{\mathbb Z_{\ge0}}
\newcounter{CC}
\newcommand{\CC}{\stepcounter{CC}\CCx} 
\newcommand{\CCx}{C_{\arabic{CC}}}     
\newcommand{\CCdef}[1]{\xdef#1{\CCx}}     
\newcommand{\CCname}[1]{\CC\CCdef{#1}}    
\newcommand{\CCreset}{\setcounter{CC}0} 
\newcounter{cc}
\newcommand{\cc}{\stepcounter{cc}\ccx} 
\newcommand{\ccx}{c_{\arabic{cc}}}     
\newcommand{\ccdef}[1]{\xdef#1{\ccx}}     
\newcommand{\ccname}[1]{\cc\ccdef{#1}}    
\newcommand{\ccreset}{\setcounter{cc}0} 
\renewcommand\Re{\operatorname{Re}}
\renewcommand\Im{\operatorname{Im}}
\newcommand\E{\operatorname{\mathbb E{}}}
\renewcommand\P{\operatorname{\mathbb P{}}}
\renewcommand\L{\operatorname{L}}
\newcommand\Var{\operatorname{Var}}
\newcommand\Cov{\operatorname{Cov}}
\newcommand\Corr{\operatorname{Corr}}
\newcommand\Exp{\operatorname{Exp}}
\newcommand\Po{\operatorname{Po}}
\newcommand\Bi{\operatorname{Bi}}
\newcommand\Bin{\operatorname{Bin}}
\newcommand\Be{\operatorname{Be}}
\newcommand\Ge{\operatorname{Ge}}
\newcommand\NBi{\operatorname{NegBin}}
\newcommand\Res{\operatorname{Res}}
\newcommand\fall[1]{^{\underline{#1}}}
\newcommand\rise[1]{^{\overline{#1}}}
\newcommand\supp{\operatorname{supp}}
\newcommand\sgn{\operatorname{sgn}}
\newcommand\Tr{\operatorname{Tr}}
\newcommand\ga{\alpha}
\newcommand\gb{\beta}
\newcommand\tgb{\tilde{\gb}}
\newcommand\gd{\delta}
\newcommand\gD{\Delta}
\newcommand\gf{\varphi}
\newcommand\gam{\gamma}
\newcommand\gG{\Gamma}
\newcommand\gk{\varkappa}
\newcommand\gl{\lambda}
\newcommand\gL{\Lambda}
\newcommand\go{\omega}
\newcommand\gO{\Omega}
\newcommand\gs{\sigma}
\newcommand\gss{\sigma^2}
\newcommand\gth{\theta}
\newcommand\eps{\varepsilon}
\newcommand\ep{\varepsilon}
\newcommand\bJ{\bar J}
\newcommand\cA{\mathcal A}
\newcommand\cB{\mathcal B}
\newcommand\cC{\mathcal C}
\newcommand\cD{\mathcal D}
\newcommand\cE{\mathcal E}
\newcommand\cF{\mathcal F}
\newcommand\cG{\mathcal G}
\newcommand\cH{\mathcal H}
\newcommand\cI{\mathcal I}
\newcommand\cJ{\mathcal J}
\newcommand\cK{\mathcal K}
\newcommand\cL{{\mathcal L}}
\newcommand\cM{\mathcal M}
\newcommand\cN{\mathcal N}
\newcommand\cO{\mathcal O}
\newcommand\cP{\mathcal P}
\newcommand\cQ{\mathcal Q}
\newcommand\cR{{\mathcal R}}
\newcommand\cS{{\mathcal S}}
\newcommand\cT{{\mathcal T}}
\newcommand\cU{{\mathcal U}}
\newcommand\cV{\mathcal V}
\newcommand\cW{\mathcal W}
\newcommand\cX{{\mathcal X}}
\newcommand\cY{{\mathcal Y}}
\newcommand\cZ{{\mathcal Z}}
\newcommand\ett[1]{\boldsymbol1_{#1}}
\newcommand\bigett[1]{\boldsymbol1\bigcpar{#1}}
\newcommand\Bigett[1]{\boldsymbol1\Bigcpar{#1}}
\newcommand\etta{\boldsymbol1}
\newcommand\smatrixx[1]{\left(\begin{smallmatrix}#1\end{smallmatrix}\right)}
\newcommand\limn{\lim_{n\to\infty}}
\newcommand\limN{\lim_{N\to\infty}}
\newcommand\qw{^{-1}}
\newcommand\qww{^{-2}}
\newcommand\qq{^{1/2}}
\newcommand\qqw{^{-1/2}}
\newcommand\qqq{^{1/3}}
\newcommand\qqqb{^{2/3}}
\newcommand\qqqw{^{-1/3}}
\newcommand\qqqbw{^{-2/3}}
\newcommand\qqqq{^{1/4}}
\newcommand\qqqqc{^{3/4}}
\newcommand\qqqqw{^{-1/4}}
\newcommand\qqqqcw{^{-3/4}}
\newcommand\intii{\int_{-1}^1}
\newcommand\intoi{\int_0^1}
\newcommand\intoo{\int_0^\infty}
\newcommand\intoooo{\int_{-\infty}^\infty}
\newcommand\oi{[0,1]}
\newcommand\ooo{[0,\infty)}
\newcommand\ooox{[0,\infty]}
\newcommand\oooo{(-\infty,\infty)}
\newcommand\setoi{\set{0,1}}
\newcommand\dtv{d_{\mathrm{TV}}}
\newcommand\dd{\,\mathrm{d}}
\newcommand\ddx{\mathrm{d}}
\newcommand{\pgf}{probability generating function}
\newcommand{\mgf}{moment generating function}
\newcommand{\chf}{characteristic function}
\newcommand{\ui}{uniformly integrable}
\newcommand\rv{random variable}
\newcommand\lhs{left-hand side}
\newcommand\rhs{right-hand side}
\newcommand\gnp{\ensuremath{G(n,p)}}
\newcommand\gnm{\ensuremath{G(n,m)}}
\newcommand\gnd{\ensuremath{G(n,d)}}
\newcommand\gnx[1]{\ensuremath{G(n,#1)}}
\newcommand\etto{\bigpar{1+o(1)}}
\newcommand\GW{Galton--Watson}
\newcommand\GWt{\GW{} tree}
\newcommand\cGWt{conditioned \GW{} tree}
\newcommand\GWp{\GW{} process}
\newcommand\tB{{\widetilde F}}
\newcommand\tC{{\widetilde C}}
\newcommand\tG{{\widetilde G}}
\newcommand\tI{{\widetilde I}}
\newcommand\tK{{\widetilde K}}
\newcommand\tW{{\widetilde W}}
\newcommand\tX{{\widetilde X}}
\newcommand\tY{{\widetilde Y}}
\newcommand\kk{\varkappa}
\newcommand\spann[1]{\operatorname{span}(#1)}
\newcommand\tn{\cT_n}
\newcommand\tnv{\cT_{n,v}}
\newcommand\rea{\Re\ga}
\newcommand\wgay{{-\ga-\frac12}}
\newcommand\qgay{{\ga+\frac12}}
\newcommand\ex{\mathbf e}
\newcommand\uu{\mathbf u}
\newcommand\vv{\mathbf v}
\newcommand\xx{\mathbf x}
\newcommand\yy{\mathbf y}
\newcommand\zz{\mathbf z}
\newcommand\hF{\widehat F}
\newcommand\hK{\widehat K}
\newcommand\hX{\widehat X}
\newcommand\Bh{B}
\newcommand\sgt{simply generated tree}
\newcommand\sgrt{simply generated random tree}
\newcommand\hh[1]{d(#1)}
\newcommand\WW{\widehat W}
\newcommand\coi{C\oi}
\newcommand\out{\gd^+}
\newcommand\zne{Z_{n,\eps}}
\newcommand\ze{Z_{\eps}}
\newcommand\gatoo{\ensuremath{\ga\to\infty}}
\newcommand\rtoo{\ensuremath{r\to\infty}}
\newcommand\Yoo{Y_\infty}
\newcommand\bes{R}
\newcommand\tex{\tilde{\ex}}
\newcommand\tbes{\tilde{\bes}}
\newcommand\Woo{W_\infty}
\newcommand{\hm}{m_1}
\newcommand{\thm}{\tilde m_1}
\newcommand{\bbb}{B^{(3)}}
\newcommand{\rr}{r^{1/2}}
\newcommand\coo{C[0,\infty)}
\newcommand\coT{\ensuremath{C[0,T]}}
\newcommand\expx[1]{e^{#1}}
\newcommand\gdtau{\gD\tau}
\newcommand\ygam{Y_{(\gam)}}
\newcommand\EE{V}
\newcommand\pigsqq{\sqrt{2\pi\gss}}
\newcommand\pigsqqw{\frac{1}{\sqrt{2\pi\gss}}}
\newcommand\gapigsqqw{\frac{(\ga-\frac12)\qw}{\sqrt{2\pi\gss}}}
\newcommand\gdd{\frac{\gd}{2}}
\newcommand\raisetagbase{\raisetag{\baselineskip}}
\newcommand\eit{e^{\ii t}}
\newcommand\emit{e^{-\ii t}}
\newcommand\tgf{\tilde\gf}
\newcommand\txi{\tilde\xi}
\newcommand\intT{\frac{1}{2\pi}\int_{-\pi}^\pi}
\newcommand\intpi{\int_{-\pi}^\pi}
\newcommand\Li{\operatorname{Li}}
\newcommand\doi{D_{01}}
\newcommand\cHoi{\cH(\doi)}
\newcommand\db{D}
\newcommand\dbm{D_-}
\newcommand\dbmb{\overline D_-}
\newcommand\dbmx{\widehat D_-}
\newcommand\bdb{\overline{D_B}}
\newcommand\xq{\setminus\set{\frac12}}
\newcommand\xo{\setminus\set{0}}
\newcommand\tqn{t/\sqrt n}
\newcommand\intpm[1]{\int_{-#1}^{#1}}
\newcommand\gnaxt{g_n(\ga,x,t)}
\newcommand\gaxt{g(\ga,x,t)}
\newcommand\gssx{\frac{\gss}2}
\newcommand\tq{\tilde q}
\newcommand\tr{\tilde r}
\newcommand\gao{\ga_0}
\newcommand\ppp{\cP_1}
\newcommand\dx{D^*}
\newcommand\tpsi{\tilde\psi}
\newcommand\tgD{\tilde\gD}
\newcommand\xinn{\xi_{n-1,N}}
\newcommand\zzn{\frac12+\ii y_n}
\newcommand\OHD{O_{\cH(D)}}
\newcommand\OHDx{O_{\cH(\dx)}}
\newcommand\tgdn{\tgD_N}
\newcommand\xgdn{\gD^*_N}
\newcommand\intt{\int_0^T}
\newcommand\act{|\cT|}
\newcommand{\ignore}[1]{}
\newcommand{\Holder}{H\"older}
\newcommand\CS{Cauchy--Schwarz}
\newcommand\CSineq{\CS{} inequality}
\newcommand{\Levy}{L\'evy}
\newcommand{\Takacs}{Tak\'acs}
\newcommand{\Frechet}{Fr\'echet}
\newcommand{\maple}{\texttt{Maple}}
\newcommand\citex{\REM}
\newcommand\refx[1]{\texttt{[#1]}}
\newcommand\xref[1]{\texttt{(#1)}}
\pgfplotsset{compat=1.3}
\begin{document}

\date{February~20, 2024}

\maketitle

\begin{abstract}
\begin{scriptsize}
For $d \geq 2$ and \iid\ $d$-dimensional observations $X^{(1)}, X^{(2)}, \ldots$ with independent Exponential$(1)$ coordinates, we revisit the study by Fill and Naiman (\emph{Electron.\ J.\ Probab.},\ 25:Paper No.\ 92, 24 pp.,\ 2020)  of the boundary (relative to the closed positive orthant), or ``frontier'', $F_n$ of the closed Pareto record-setting (RS) region
\[
\mbox{RS}_n
:= \{0 \leq x \in {\mathbb R}^d: x \not\prec X^{(i)}\mbox{\ for all $1 \leq i \leq n$}\}
\]
at time $n$, where
$0 \leq x$ means that $0 \leq x_j$ for $1 \leq j \leq d$ and
$x \prec y$ means that $x_j < y_j$ for $1 \leq j \leq d$.  With $x_+ := \sum_{j = 1}^d x_j$, let
\[
F_n^- := \min\{x_+: x \in F_n\} \quad \mbox{and} \quad
F_n^+ := \max\{x_+: x \in F_n\}.
\]
Almost surely, there are for each~$n$ unique vectors $\gl_n \in F_n$ and $\tau_n \in F_n$ such that $F_n^+ = (\gl_n)_+$ and $F_n^- = (\tau_n)_+$; we refer to $\gl_n$ and $\tau_n$ as the \emph{leading} and \emph{trailing} points, respectively, of the frontier.
Fill and Naiman provided rather sharp information about the typical and almost sure behavior of $F^+$, 
but somewhat crude information about $F^-$, namely, that for any $\eps > 0$ and $c_n \to \infty$ we have 
\[
\P(F_n^- - \ln n \in (- (2 + \eps) \ln \ln \ln n, c_n)) \to 1
\] 
(describing typical behavior) and almost surely 
\[
\limsup \frac{F_n^- - \ln n}{\ln \ln n} \leq 0\mbox{\ and\ }\liminf \frac{F_n^- - \ln n}{\ln \ln \ln n} \in [-2, -1].
\]

In this paper we use the theory of \emph{generators} (minima of $F_n$) together with the first- and 
second-moment methods to improve considerably the trailing-point location results to
\[
F_n^- - (\ln n - \ln \ln \ln n) \Pto - \ln(d - 1)
\]
(describing typical behavior) and, for $d \geq 3$, almost surely
\begin{align*}
&\limsup [F_n^- - (\ln n - \ln \ln \ln n)] \leq -\ln(d - 2) + \ln 2 \\
\mbox{and\ }&\liminf [F_n^- - (\ln n - \ln \ln \ln n)] \geq - \ln d - \ln 2.
\end{align*}
\end{scriptsize}
\end{abstract}

\section{Introduction, background, and main results}\label{S:intro}

{\bf Notation:\ }Throughout 
this paper we abbreviate the $k$th iterate of natural logarithm $\ln$ by $\L_k$ and $\L_1$ by $\L$, and we write $x_+ := \sum_{j = 1}^d x_j$ and $x_{\times} := \prod_{j = 1}^d x_j$ for the sum and product, respectively, of coordinates of the $d$-dimensional vector $x = (x_1, \dots, x_d)$.

Unless otherwise specifically noted, all the results of this paper hold for any dimension $d \geq 2$.

The study of univariate records is well established (\cite{Arnold(1998)} is a standard reference), but that of multivariate records remains 
under vigorous development. 
Fill and Naiman~\cite{Fillboundary(2020)} studied the stochastic process $(F_n)$, where $F_n$ is the boundary, or ``frontier'', for \emph{Pareto records}
(consult Definitions~\ref{D:record}--\ref{D:RS}) in general dimension~$d$ when the observed sequence of points $X^{(1)}, X^{(2)}, \dots$ are assumed (as they are throughout this paper, except 
where otherwise noted) to be \iid\ (independent and identically distributed) copies of a $d$-dimensional random vector~$X$ with independent Exponential$(1)$ coordinates $X_j$.  Their main goal was to sharpen (in various senses) the assertion in Bai et al.~\cite{Bai(2005)} ``that nearly all maxima occur in a thin strip sandwiched between [the] two parallel hyper-planes'' 
\[
x_+ = \L n - \L_3 n - \L[4 (d - 1)] \quad \mbox{and} \quad x_+ = \L n + 4 (d - 1) \L_2 n.
\]
They did this largely by studying (separately) the maximum and minimum sums of coordinates for points lying in $F_n$.  The results for the maximum sum were rather sharp; less so for the minimum sum.  The main aim of this paper is to use the theory of generators (minima of $F_n$) and the first- and second-moment methods to improve considerably their results about the minimum sum.

\subsection{Pareto records and the record-setting region}
\label{S:records}

For the reader's convenience, and with the permission of the authors and the copyright holder, this short subsection is excerpted largely verbatim from \cite[Section~1.1]{Fillboundary(2020)}.
 
We begin with some definitions.  
For a positive integer~$n$, let $[n] := \{1, \dots, n\}$.  Thus $[d]^{[n]}$ denotes the set of all functions
from~$[n]$ into~$[d]$, or simply the set of all $n$-tuples with each entry in $\{1, \dots, d\}$.
For $d$-dimensional vectors $x = (x_1, \dots, x_d)$ and $y = (y_1, \dots, y_d)$,
write $x \prec y$ (respectively, $x \leq y$) to mean that $x_j < y_j$ (resp.,\ $x_j \leq y_j$)
for $j \in [d]$.
(We caution that, with this convention,  $\leq$ is weaker than $\preceq$, the latter meaning ``$\prec$ or $=$''; indeed, $(0, 0) \leq (0, 1)$ but we have neither $(0, 0) \prec (0, 1)$ nor $(0, 0) = (0, 1)$.  
This distinction will 
be important 
for some of our later discussion of generators.)
The notation $x \succ y$ means $y \prec x$, and $x \geq y$ means $y \leq x$; the notation $x < y$ means
$x \leq y$ but $x \neq y$, and $y > x$ means $x < y$.  We write
$x_+ := \sum_{j = 1}^d x_j$ (respectively, $x_{\times} := \prod_{j = 1}^d x_j$) for
the sum (resp.,\ product) of coordinates of $x = (x_1, \dots, x_d)$.

\begin{definition}
\label{D:record}
(a)~We say that $X^{(k)}$ is a \emph{(Pareto) record} (or that it \emph{sets} a record at time~$k$) if $X^{(k)} \not\prec X^{(i)}$ for all $1 \leq i < k$.

(b)~If $1 \leq k \leq n$,
we say that $X^{(k)}$ is a \emph{current record} (or \emph{remaining record}, or \emph{maximum}) at time~$n$ if $X^{(k)} \not\prec X^{(i)}$ for all $1 \leq i \leq n$.
\ignore{
(c)~If $1 \leq k \leq n$,
we say that $X^{(k)}$ is a \emph{broken record} at time~$n$ if it is a record but not a current record, that is, if $X^{(k)} \not\prec X^{(i)}$ for all $1 \leq i < k$ but $X^{(k)} \prec X^{(\ell)}$ for some $k < \ell \leq n$; in that case, the observation corresponding to the smallest such~$\ell$ is said to \emph{break} or \emph{kill} the
record $X^{(k)}$.
}
\end{definition}

For 
$n \geq 1$ (or $n \geq 0$, with the obvious conventions) 
let 
$\rho_n\ ( \equiv \rho_{d, n})$
denote the number of remaining records at time~$n$ (when the dimension is~$d$).
\ignore{
Note that $R_n$ and $\beta_n$ are nondecreasing in~$n$, but the same is not true for $r_n$.  For dimension $d \geq 2$, by
standard consideration of concomitants
[that is, by considering the $d$-dimensional sequence $X^{(1)}, \ldots, X^{(n)}$ sorted from largest to smallest value of (say) last coordinate]
we see that $r_n(d)$ (that is, $r_n$ for dimension~$d$, with similar notation used here for $R_n$) has, for each~$n$, the same (univariate) distribution as $R_n(d-1)$; note, however, the same equality in distribution does \emph{not} hold for the stochastic processes $r(d)$ and $R(d - 1)$.
}

\begin{definition}
\label{D:RS}
(a)~The \emph{record-setting region} at time~$n$ is the (random) closed set of points
\[
\mbox{RS}_n := \{x \in \bbR^d: 0 \leq x \not\prec X^{(i)}\mbox{\ for all $1 \leq i \leq n$}\}.
\]

(b)~We call the (topological) boundary of $\mbox{RS}_n$ (relative to the closed positive orthant determined by the origin) its \emph{frontier} and denote it by $F_n$.
\end{definition}

\begin{figure}[htb]
\begin{tikzpicture}[scale=8]
%
%
\draw[->,thick,color=black] (0,0)--(0,1.2);
\draw[->,thick,color=black] (0,0)--(1.2,0);
%
%
\draw[thick,color=black](0,.97)--
(.03,.97)--(.03,.74) --
(.23,.74)--(.23,.65) --
(.26,.65)--(.26,.62) --
(.33,.62)--(.33,.49) --
(.56,.49)--(.56,.40) --
(.62,.40)--(.62,.38) --
(.79,.38)--(.79,.25) --
(.81,.25)--(.81,.21) --
(.85,.21)--(.85,.11) --
(.96,.11)-- (.96,.0)
;
%
%
\filldraw [black]
(.03,.97) circle (.175pt)
(.23,.74) circle (.175pt)
(.26,.65) circle (.175pt)
(.33,.62) circle (.175pt)
(.56,.49) circle (.175pt)
(.62,.40) circle (.175pt)
(.79,.38) circle (.175pt)
(.81,.25) circle (.175pt)
(.85,.21) circle (.175pt)
(.96,.11) circle (.175pt)
;
\draw [black]
(.03,.74) circle (.175pt);
%
%
\draw[dashed,color=black, thick](0,1.17)--(1.17,0);
%
%
\draw[dotted,color=black, thick](0,.77)--(.77,0);
%
%
\draw[solid,color=black, thick](0,.91)--(.91,0);
%
%
\draw (.50,.55) node[color=black] {\footnotesize $F_n$};
\draw (.7,.7) node[color=black] {\footnotesize $x_+=F_n^+$};
\draw (.3,.3) node[color=black] {\footnotesize $x_+=F_n^-$};
%
%
\draw (.830,.380) node[color=black] {\footnotesize $\lambda_n$};
\draw (.035,.71) node[color=black] {\footnotesize $\tau_n$};
\draw (.17,.90) node[color=black] {\footnotesize $x_+=\widehat{F}_n^-$};
%
%
%
\draw [->,color=black](.7,.65) arc (340:330:5mm); 
\draw [->,color=black](.3,.33) arc (180:155:2mm);
\draw [->,color=black](.47,.55) arc (110:131:3mm); 
\draw [->,color=black](.17,.87) arc (325:300:2mm); 
%
%
\draw (.7,-.05) node[color=black] {\footnotesize $x_1$};
\draw (-.05,.5) node[color=black] {\footnotesize $x_2$};
\end{tikzpicture}

\caption{Record frontier $F_n$ based on $n$ observations (for some $n \geq 10$) resulting in 10 current records (shown as solid points), with the three hyperplanes $x_+ = F_n^+$, $x_+ = F_n^-$, and $x_+ = \hF_n^-$, the leading point 
$\gl_n$ and the trailing point $\tau_n$.  Concerning the three hyperplanes, see \refD{D:W} and~\eqref{hFn-def}.
}
\label{fig:frontierdefs}
\end{figure}
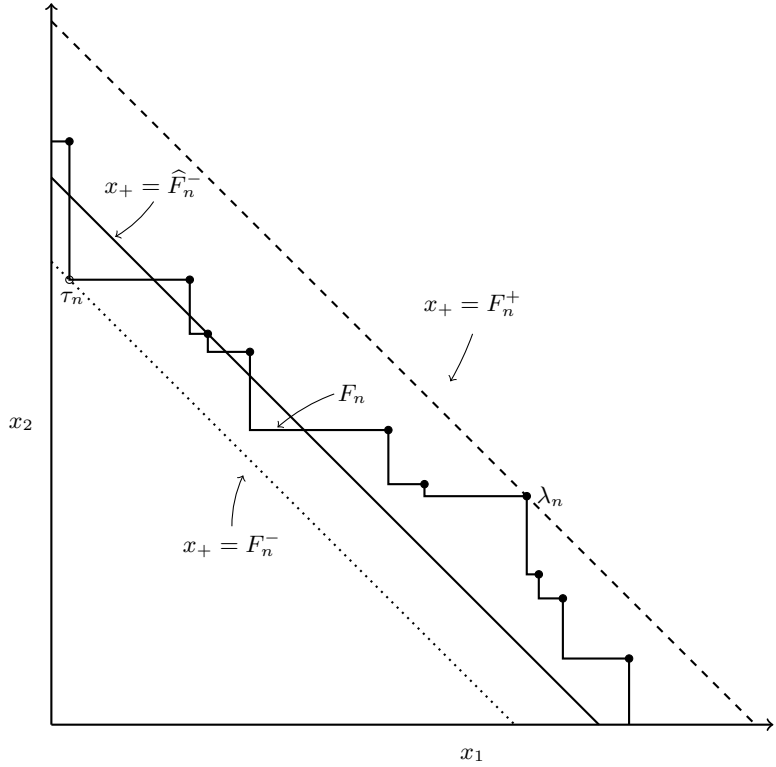

\begin{remark}
\label{R:RS}
The terminology in \refD{D:RS}(a) is natural since the next observation $X^{(n + 1)}$ sets a record if and only if it falls in the record-setting region.  Note that
\begin{align*}
\mbox{RS}_n
&= \{x \in \bbR^d: 0 \leq x \not\prec X^{(i)}\mbox{\ for all $1 \leq i \leq n$} \\
&{} \qquad \qquad \qquad \mbox{such that $X^{(i)}$ is a current record at time~$n$}\},
\end{align*}
and that the current records at time~$n$ all belong to~$\mbox{RS}_n$ but lie on its frontier.
Observe also that $F_n$ is a closed subset of $\mbox{RS}_n$.  
\end{remark}

This paper primarily concerns the stochastic process $(F_n)$, and specifically the process $F^-$ as defined (along with the process $F^+$) next
(see \refF{fig:frontierdefs}).

\begin{definition}
\label{D:W}
Recalling that $F_n$ denotes the frontier of $\mbox{RS}_n$, let
\begin{equation}
\label{-+}
F_n^- := \min\{x_+: x \in F_n\} \quad \mbox{and} \quad F_n^+ := \max\{x_+: x \in F_n\}.
\end{equation}
\ignore{
We define the \emph{width} of $F_n$ as
\begin{equation}
\label{W}
W_n := F_n^+ - F_n^-.
\end{equation}
}
Almost surely, there are for each~$n$ unique vectors $\gl_n \in F_n$ and $\tau_n \in F_n$ such that $F_n^+ = \gl_n$ and $F_n^- = \tau_n$; we refer to $\gl_n$ and $\tau_n$ as the \emph{leading} and \emph{trailing} points, respectively, of the frontier.
\end{definition}

We will need the following simple observation about the process $F^-$.

\begin{lemma}[\underline{nondecreasing sample paths for $F_n^-$}]
\label{L:-}
The process $F^-$ has nondecreasing sample paths.
\end{lemma}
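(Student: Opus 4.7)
The assertion $F_n^- \leq F_{n+1}^-$ for every $n$ will follow from two observations: (i)~the record-setting region is monotone decreasing in~$n$, and (ii)~the minimum of $x \mapsto x_+$ over $\mathrm{RS}_n$ is attained on its frontier $F_n$ and equals $F_n^-$.

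For~(i), directly from \refD{D:RS}(a), a point $x \geq 0$ lies in $\mathrm{RS}_{n+1}$ iff $x \not\prec X^{(i)}$ for all $i \in [n+1]$, which is strictly more restrictive than the condition for membership in $\mathrm{RS}_n$; hence $\mathrm{RS}_{n+1} \subseteq \mathrm{RS}_n$. For~(ii), I would argue as follows. First, a minimizer of the continuous function $x \mapsto x_+$ on $\mathrm{RS}_n$ exists: pick any current record $X^{(i)}$ (which lies in $\mathrm{RS}_n$ by \refR{R:RS}) and intersect $\mathrm{RS}_n$ with the compact sublevel set $\{x \geq 0 : x_+ \leq X^{(i)}_+\}$; since $\mathrm{RS}_n$ is closed, this intersection is compact and nonempty, and a minimizer $x^*$ exists there.

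Next, $x^*$ must lie in the frontier $F_n$. Since $X^{(1)}$ has strictly positive coordinates almost surely, $0 \prec X^{(1)}$ a.s., so $0 \notin \mathrm{RS}_n$, and therefore $x^* \neq 0$, so some coordinate $x^*_j > 0$. If $x^*$ were in the interior of $\mathrm{RS}_n$ relative to the closed positive orthant, then a small decrease in $x^*_j$ would produce a point still in $\mathrm{RS}_n$ but with strictly smaller coordinate sum, contradicting minimality. Thus $x^* \in F_n$, giving $\min\{x_+ : x \in \mathrm{RS}_n\} = F_n^-$.

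Combining (i) and (ii),
\[
F_{n+1}^- = \min\{x_+ : x \in \mathrm{RS}_{n+1}\} \geq \min\{x_+ : x \in \mathrm{RS}_n\} = F_n^-,
\]
as desired. No step here is truly an obstacle; the only mild subtlety is the justification that the infimum over the (unbounded, closed) set $\mathrm{RS}_n$ is attained on $F_n$, which is handled by the compactness reduction above.
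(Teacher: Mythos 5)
Your proof is correct and follows essentially the same route as the paper, whose one-line argument rests on the containment chain $F_{n+1} \subseteq \mathrm{RS}_{n+1} \subseteq \mathrm{RS}_n$ together with the (implicit) fact that the minimum of $x \mapsto x_+$ over $\mathrm{RS}_n$ is attained on the frontier $F_n$. You simply make explicit the compactness and relative-interior details that the paper leaves to the reader.
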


\begin{proof}
The asserted monotonicity
of $F^-$ follows easily from the observation that
$F_{n + 1} \subseteq \mbox{RS}_{n + 1} \subseteq \mbox{RS}_n$.
\end{proof}

\subsection{The record-setting frontier; our two main theorems}
\label{S:boundary}

Fill and Naiman first showed, in a precise sense \cite[Theorem~1.4]{Fillboundary(2020)}, that the difference between the sum of coordinates (call it $Y_n$) of a ``generic'' current record at time~$n$ and $\L n$ converges in distribution to standard Gumbel.  They next translated results from classical extreme value theory due to Kiefer~\cite{Kiefer(1972)} to the setting of multivariate records to produce rather sharp typical-behavior and almost-sure results about the process $F^+$.  For completeness, we repeat their main result \cite[Theorem~1.8]{Fillboundary(2020)} for $F^+$ here, except that we have rather effortlessly extended part~(b) of that theorem using Kiefer's ``first proof'' as described in 
\cite[proof of Theorem~1.8(b)]{Fillboundary(2020)}.  We remark that the difference between the top-boundary threshold at about $\L n + d \L_2 n$ and bottom-boundary threshold at about 
$\L n + (d - 1) \L_2 n$ is a noteworthy feature of $F_n^+$ discussed further in \cite[Section~1.3]{Fillboundary(2020)}.  

\begin{theorem}[Kiefer~\cite{Kiefer(1972)}]
\label{T:+}
Consider the process $F^+$ defined at~\eqref{-+}.
\medskip

{\rm (a)~\underline{Typical behavior of $F^+$}:}
\[
F_n^+ - [\L n + (d - 1) \L_2 n - \L((d - 1)!)] \Lcto G.
\]
\smallskip

{\rm (b)~\underline{Top boundaries for $F^+$}:\ } For any sequence $b_n \to \infty$ which is ultimately monotone increasing,
\[
\P(F_n^+ \geq b_ n\mbox{\rm \ \io}) = \mbox{\rm $1$ or~$0$ according as $\sum e^{-b_n} b_n^{d - 1}$ diverges or converges}.
\]
In particular, for any $k \geq 2$ we have
\[
\P\left( F_n^+ \geq \L n + d \L_2 n + \sum_{i = 3}^k \L_i n + c \L_{k + 1} n\mbox{\rm \ \io} \right) =
\begin{cases}
1 & \mbox{{\rm if} $c \leq 1$;} \\
0 & \mbox{{\rm if} $c > 1$}.
\end{cases}
\]
\smallskip

{\rm (c)~\underline{Bottom boundaries for $F^+$}:}
\[
\P(F_n^+ \leq \L n + (d - 1) \L_2 n - \L_3 n - \L((d-1)!) + c\mbox{\rm \ \io}) =
\begin{cases}
1 & \mbox{{\rm if} $c \geq 0$;} \\
0 & \mbox{{\rm if} $c < 0$}.
\end{cases}
\]
\nopf
\end{theorem}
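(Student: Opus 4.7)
The essential reduction is geometric: I would first prove that
\[
F_n^+ = \max_{k \le n} X^{(k)}_+,
\]
thereby converting the problem into one about the classical maximum of i.i.d.\ random variables. Indeed, the complement of $\mbox{RS}_n$ inside the positive orthant equals $\bigcup_{i \le n} \prod_j [0,X^{(i)}_j)$, so any frontier point $x$ lies in the closure of one of these product boxes; that is, there exists an~$i$ with $x \le X^{(i)}$ coordinatewise, and hence $x_+ \le X^{(i)}_+$. Since each current record is itself a corner of its own box (hence in $F_n$) and every broken record is strictly dominated coordinatewise by the record that breaks it (hence has strictly smaller sum), the maximum of $x_+$ over $F_n$ equals $\max_{k \le n} X^{(k)}_+$. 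Writing $S_k := X^{(k)}_+$, which is $\gG(d,1)$-distributed because the coordinates are i.i.d.\ Exponential, the tail has the classical form $\bar F(x) := \P(S > x) \sim x^{d-1} e^{-x}/(d-1)!$ as $x \to \infty$.

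For part~(a), with $b_n := \L n + (d-1)\L_2 n - \L((d-1)!)$ one checks $n\,\bar F(b_n + t) \to e^{-t}$ for each $t \in \bbR$, so the standard i.i.d.\ extreme-value theorem gives $F_n^+ - b_n \Lcto G$.

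For the upper and lower class statements in parts~(b) and~(c), I would invoke Kiefer's machinery for the running maximum of an i.i.d.\ sequence. In~(b), the convergent side ($c > 1$) follows from the first Borel--Cantelli lemma applied to $\{S_n > b_n\}$, whose probabilities are comparable to $e^{-b_n} b_n^{d-1}$ and whose sum converges; the divergent side uses a geometric subsequence $n_j$ along which the events $\{S_{n_j} > b_{n_j}\}$ are independent with divergent probability sum, so that the second Borel--Cantelli yields $S_{n_j} > b_{n_j}$ i.o., and monotonicity of $F_n^+$ together with slow growth of $b_n$ between subsequence terms transfers this to $F_n^+ \ge b_n$ i.o.\ along the full sequence. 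For~(c), I would exploit
\[
\P(F_n^+ \le x) = F(x)^n \approx \exp\bigpar{-n\,\bar F(x)},
\]
compute $n\,\bar F(b_n^-) \sim e^{-c}\L_2 n$ for the threshold $b_n^- := \L n + (d-1)\L_2 n - \L_3 n - \L((d-1)!) + c$, yielding $\P(F_n^+ \le b_n^-) \approx (\L n)^{-e^{-c}}$; since this sum is inconclusive, I would pass to a sparse subsequence (say $n_j = 2^j$) where the probabilities become $\approx j^{-e^{-c}}$ and exhibit the desired dichotomy at $c = 0$, then lift back to the full sequence via monotonicity of $F_n^+$ and slow variation of $b_n^-$ across consecutive subsequence terms.

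The principal obstacle is not the weak convergence in~(a) but the upper/lower class statements: the events $\{F_n^+ > b_n\}$ and $\{F_n^+ \le b_n^-\}$ are highly dependent across~$n$, so naive Borel--Cantelli either gives the wrong dichotomy outright or applies only in one direction. The fix is Kiefer's trick of passing to a sparse subsequence where the underlying $S_n$'s (or their sub-maxima over disjoint blocks) decouple, and then using the monotonicity of $F_n^+$ and the slow variation of the thresholds to recover conclusions along the full sequence; tracking the iterated-logarithmic thresholds to the desired precision is routine but delicate bookkeeping.
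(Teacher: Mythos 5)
Your reduction $F_n^+=\max_{k\le n}X^{(k)}_+$ (a maximum of \iid\ Gamma$(d,1)$ variables) followed by Kiefer's upper/lower-class results for \iid\ maxima is exactly the route the paper takes — the theorem is quoted without proof here precisely because \cite{Fillboundary(2020)} proves it by this same translation to classical extreme value theory. The argument is correct; the only bookkeeping point is that in part~(c) the convergent side requires subsequences $n_j=\lfloor(1+\delta)^j\rfloor$ with $\delta\downarrow0$ (a fixed ratio $2$ only covers $c<-\L 2$), and on the divergent side of~(b) the events $\{S_n>b_n\}$ are already independent, so no subsequence is needed there.
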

\medskip

From \refT{T:+} it follows in particular that
\ignore{
\refT{T:+} gives rise immediately to the following succinct corollary.
\medskip

\begin{corollary}[Kiefer~\cite{Kiefer(1972)}]
\label{C:+}
Consider the process $F^+$ defined at~\eqref{-+}.
\medskip

{\rm (a)~\underline{Typical behavior of $F^+$}:}
}
\[
\frac{F_n^+ - \L n}{\L_2 n} \Pto d - 1
\]
and
\ignore{
{\rm (b)~\underline{Almost sure behavior for $F^+$}:}
}
\[
\liminf \frac{F_n^+ - \L n}{\L_2 n} = d - 1 < d =
\limsup \frac{F_n^+ - \L n}{\L_2 n} \mbox{\rm \ \as}
\]
\ignore{
\end{corollary}
}
\medskip

The results derived in \cite{Fillboundary(2020)} for $F^-$ are much less sharp than for $F^+$.  For the reader's convenience, we repeat those results here.  Although parts~(a) and~(c1) were stated with coefficient $-3$ [rather than $ - (2 + c)$] for the $\L_3 n$ term, the improvement we have noted here is pointed out in \cite[Remark~3.3]{Fillboundary(2020)}.

\begin{theorem}[\cite{Fillboundary(2020)}, Theorem~1.12]
\label{T:-OLD}
Consider the process $F^-$ defined at~\eqref{-+}.
\vspace{-.1in}

{\rm (a)~\underline{Typical behavior of $F^-$}:}
\[
\P(F_n^- \leq \L n - (2 + c) \L_3 n) \to 0\mbox{\rm \ if $c > 0$}
\]
and
\[
\P(F_n^- \geq \L n + c_n) \to 0\mbox{\rm \ if $c_n \to \infty$}.
\]
\medskip

{\rm (b)~\underline{Top outer boundaries for $F^-$}:}
\[
\P(F_n^- \geq \L n + c \L_2 n\mbox{\rm \ \io}) = 0 \mbox{\rm \ if $c > 0$}.
\]
\medskip

{\rm (c1)~\underline{Bottom outer boundaries for $F^-$}:}
\[
\P(F_n^- \leq \L n - (2 + c) \L_3 n\mbox{\rm \ \io}) = 0\mbox{\rm \ if $c > 0$}.
\]
\medskip

{\rm (c2)~\underline{A bottom inner boundary for $F^-$}:}
\[
\P(F_n^- \leq \L n - \L_3 n\mbox{\rm \ \io}) = 1.
\]
\end{theorem}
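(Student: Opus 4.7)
\emph{Proof plan.} The four assertions divide into upper- and lower-direction control of $F_n^-$, each handled via a moment method. The key structural observation is that $F_n^-$ is attained at an inner corner of the staircase $F_n$ (a \emph{generator}, i.e.\ a minimum element of $F_n$ under the coordinatewise partial order), and that the coordinate-sum at any inner corner is strictly smaller than at either adjacent outer corner, so
\[
F_n^- \;\leq\; \min\{X^{(i)}_+ : 1 \leq i \leq n,\ X^{(i)}\text{ is a current record at time } n\}.
\]

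For \textup{(a) upper}, this inequality combined with \cite[Theorem~1.4]{Fillboundary(2020)}---which provides a current record of coordinate-sum $\L n + O_p(1)$---immediately yields $\P(F_n^- \geq \L n + c_n) \to 0$ for any $c_n \to \infty$. For \textup{(b)} one would let $Z_n$ count the current records at time~$n$ with coordinate-sum at most $\L n + (c/2)\L_2 n$; first- and second-moment calculations (the latter integrating over ordered pairs of current records and exploiting the independence of the $d$ coordinates) give $\E Z_n \asymp (\L n)^{d-1}$ and $\Var Z_n = o((\E Z_n)^2)$, so that $\P(Z_n = 0)$ is summable along any sufficiently dense subsequence $(n_k)$ with $\L n_{k+1} - \L n_k \to 0$. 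Borel--Cantelli combined with the monotonicity of $F_n^-$ (\refL{L:-}) then produces the desired a.s.\ upper bound.

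For \textup{(a) lower} and \textup{(c1)}, let $N_s$ count the generators of $F_n$ with coordinate-sum at most~$s$. A generator~$g$ is pinned down by an ordered $d$-tuple of observations $R^{(1)}, \dots, R^{(d)}$, with $R^{(j)}$ witnessing $g$ in coordinate direction $j$ (i.e.\ $R^{(j)}_j = g_j$ and $R^{(j)}_{j'} > g_{j'}$ for $j' \neq j$), together with the event that no further observation strictly dominates~$g$. Independence of the $d$ coordinates reduces the expectation to
\[
\E N_s \;\approx\; n^d \int_{g \geq 0,\ g_+ \leq s} e^{-d g_+}\,\exp\!\bigl(-n e^{-g_+}\bigr)\, dg,
\]
which, after collapsing to $u = g_+$ and substituting $u = \L n - t$, concentrates sharply near the cutoff $t = t_n := \L n - s$. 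With $s = \L n - (2+c)\L_3 n$ one has $e^{t_n} = (\L_2 n)^{2+c}$, and the factor $\exp(-(\L_2 n)^{2+c})$ swamps all polynomial prefactors in $\L n$ and $\L_2 n$; hence $\E N_s \to 0$ and Markov's inequality gives \textup{(a) lower}. For \textup{(c1)} the same bound is summable along $n_k = \lfloor e^{k\delta}\rfloor$ for any small $\delta > 0$; inflating $c$ harmlessly and invoking the monotonicity of $F_n^-$ extends the Borel--Cantelli conclusion to all~$n$.

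For \textup{(c2)}, the direction reverses and a second-moment argument is used. At $s = \L n - \L_3 n$ the same first-moment analysis gives $e^{t_n} = \L_2 n$, so $\E N_s$ grows like $(\L n)^{d-2}(\L_2 n)^{d-1}$---unbounded for $d \geq 2$. One then computes $\E N_s^2$ by integrating over ordered pairs of generator configurations; splitting the pair integral into ``far'' pairs (essentially disjoint in both coordinates and in witnessing records) and ``near'' pairs (a lower-dimensional boundary contribution) and bounding each by a constant multiple of $(\E N_s)^2$ yields $\E N_s^2 = O((\E N_s)^2)$, so that the second-moment bound $\P(N_s \geq 1) \geq (\E N_s)^2/\E N_s^2$ stays bounded below by some $\gamma > 0$ uniformly in~$n$. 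To upgrade this single-time lower bound to the infinitely-often statement, one passes to a widely-spaced subsequence $n_k$ on which the observations driving the favourable events live in asymptotically disjoint time-blocks, verifies approximate independence of the events $\{F_{n_k}^- \leq \L n_k - \L_3 n_k\}$, and applies the second Borel--Cantelli lemma. The principal technical obstacle is precisely the pair-correlation control: the joint constraint ``no observation dominates either candidate generator'' couples the two candidates through the union $\{x : x \succ g\} \cup \{x : x \succ g'\}$, whose probability $e^{-g_+} + e^{-g'_+} - e^{-(g \vee g')_+}$ does not factor, and a careful near/far decomposition is needed to keep the pair integral from exceeding $O((\E N_s)^2)$.
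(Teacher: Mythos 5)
First, a point of orientation: this paper does not prove Theorem~\ref{T:-OLD} at all --- it is quoted from \cite[Theorem~1.12]{Fillboundary(2020)} for the reader's convenience, so the only ``proof'' here is the citation; the new machinery of the paper is aimed at the sharper Theorem~\ref{T:-}. Within that context, your parts (a)-upper, (a)-lower and (c1) are sound. The inequality $F_n^- \leq \hF_n^-$ together with the Gumbel limit for a generic current record gives (a)-upper; and your first-moment bound on the generator count is precisely the route this paper takes in Sections~\ref{S:char}--\ref{S:LB-} for the much sharper Proposition~\ref{P:-lower} (whereas the cited source used a geometric lemma instead of generators). At level $\L n - (2+c)\L_3 n$ the factor $\exp(-(\L_2 n)^{2+c})$ does swamp the polylogarithmic prefactors, and your subsequence-plus-monotonicity step for (c1) matches the paper's proof of Theorem~\ref{T:hF-}(c). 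You should, however, also account for the lower-dimensional (non-interior) generators, as in Lemma~\ref{L:GI}; their contribution is dominated by the interior term, but the trailing point need not be interior.

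The genuine gaps are in (b) and (c2). For (b), Chebyshev with a Poisson-type variance bound gives at best $\P(Z_n = 0) \lesssim 1/\E Z_n \asymp (\L n)^{-(d-1)}$; your weaker claim $\Var Z_n = o((\E Z_n)^2)$ gives only $\P(Z_n=0)=o(1)$, which is not summable along any subsequence. Even with the sharper bound, for $d=2$ the terms are $\asymp 1/\L n_k$, not summable along $n_k = 2^k$; making the subsequence \emph{denser}, as you propose, only adds terms, while making it sparse enough for summability (roughly $\L n_k \gg k\,\L k$) destroys the monotonicity interpolation, which tolerates gaps $\L n_{k+1}-\L n_k = O(\L_2 n_k)$ only. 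So your route proves (b) for $d\geq 3$ but fails at $d=2$ --- consistent with the authors restricting their improved Theorem~\ref{T:-}(b) to $d \geq 3$ and admitting they cannot better part (b) when $d=2$; the original proof of (b) must rest on something other than a second-moment bound. For (c2), the load-bearing step --- $\E N_s^2 = O((\E N_s)^2)$ for the \emph{generator} count --- is asserted, not carried out, and it is genuinely hard: the authors explicitly list even the lead-order variance of the generator count as open (Remark~\ref{R:time}(c)), and their second-moment computation for the simpler record count already fills Appendix~\ref{A:varproof}; two generators can share up to $d-1$ of their defining records, so the ``near'' pairs are not a priori a negligible boundary term. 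Moreover ``approximate independence along widely spaced $n_k$'' is not available: a low generator at time $n_{k+1}$ requires that \emph{none} of the $n_{k+1}$ observations, including all the old ones, dominate it, so the events are coupled through the entire history; one would need a conditional Borel--Cantelli argument, or Kochen--Stone combined with the Hewitt--Savage zero--one law. (For $d\geq 3$ you are missing a shortcut: the typical value of $F_n^-$ is $\L n - \L_3 n - \L(d-1)$ with $\L(d-1)>0$, so $\P(F_n^- \leq \L n - \L_3 n)\to 1$ and $\P(\limsup A_n)\geq \limsup \P(A_n)=1$; only $d=2$ is delicate.) The cited source sidestepped all of this for (c2) with its geometric Lemma~3.1, as noted at the start of Section~\ref{S:char}.
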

\medskip

\ignore{
\refT{T:-} gives rise immediately to the following succinct corollary.
\medskip

\begin{corollary}
\label{C:-OLD}
Consider the process $F^-$ defined at~\eqref{-+}.
\medskip

{\rm (a)~\underline{Typical behavior of $F^-$}:}
\[
\frac{F_n^- - \L n}{\L_2 n} \Pto 0.
\]

{\rm (b)~\underline{Almost sure behavior for $F^-$}:\ }If $d \geq 2$, then
\[
\lim \frac{F_n^- - \L n}{\L_2 n} = 0\mbox{\rm \ \as}
\]
\end{corollary}
\medskip
}

The first of two main results of this paper, \refT{T:-}, sharpens \refT{T:-OLD} considerably.
In light of (i)~the constant-order variability for a ``generic'' current record at time~$n$ described in the opening paragraph of this subsection and (ii)~\refT{T:+}(a), we find it quite surprising that, properly centered but not scaled, $F_n^-$ has a limit in probability.
\medskip

\begin{theorem}
\label{T:-}
Consider the process $F^-$ defined at~\eqref{-+}.
\medskip

{\rm (a)~\underline{Typical behavior of $F^-$}:}
\[
F_n^- = \L n - \L_3 n - \L(d - 1) + O_p\!\left( \frac{\L_3 n}{\L_2 n} \right).
\] 
\medskip

{\rm (b)~\underline{Top outer boundaries for $F^-$}:}  If $d \geq 3$, then
\[
\P(F_n^- \geq \L n - \L_3 n - \L(d - 2) + \L 2 + c\mbox{\rm \ \io}) = 0 \mbox{\rm \ if $c > 0$}.
\]
\vspace{-.1in}

{\rm (c)~\underline{Bottom outer boundaries for $F^-$}:}
\[
\P(F_n^- \leq \L n - \L_3 n - \L d - \L 2 - c\mbox{\rm \ \io}) = 0\mbox{\rm\ if $c > 0$}.
\]
\ignore{
\medskip
{\rm (c2)~\underline{A bottom inner boundary for $F^-$}:}
\[
\P(F_n^- \leq \L n - \L_3 n\mbox{\rm \ \io}) = 1.
\]
}
\end{theorem}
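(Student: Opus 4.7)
The plan is to study the count of low-sum interior generators
\[
M_n(s) := \#\bigcpar{g : g \text{ is an interior generator of } F_n \text{ with } g_+ \leq s},
\]
noting that almost surely $\{F_n^- \leq s\} = \{M_n(s) \geq 1\}$.  An interior generator is parameterized by an ordered $d$-tuple $(i_1, \ldots, i_d) \in [n]^d$ of distinct current-record indices via $g_j = X^{(i_j)}_j$, subject to the non-domination constraint $g \not\prec X^{(k)}$ for every $k \in [n]$.  Using independence of the $X^{(i)}$'s, the identity $\P(g \prec X^{(k)} \mid g_+) = e^{-g_+}$, and a careful accounting for the current-record side conditions, one derives
\[
\E M_n(s) \sim (\L n)^{d - 1}\, e^{-n e^{-s}}
\]
for $s$ in the window $\L n - \L_3 n + O(1)$.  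Writing $s = \L n - \L_3 n - \L(d - 1) + t$ recasts this as $\E M_n(s) \sim (\L n)^{(d - 1)(1 - e^{-t})}$, whose exponent crosses $0$ exactly at $t = 0$, pinpointing the typical value claimed in part~(a).

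\textbf{First-moment arguments for (a, lower) and (c).}  For (a, lower), take $s_n^- := \L n - \L_3 n - \L(d - 1) - K \L_3 n / \L_2 n$; then $\E M_n(s_n^-) \sim (\L_2 n)^{-K(d - 1)} \to 0$ for any $K > 0$, giving $\P(F_n^- \leq s_n^-) \to 0$.  For~(c), set $s_n := \L n - \L_3 n - \L d - \L 2 - c$ and use the dyadic subsequence $n_k := 2^k$.  Since $n_{k-1} e^{-s_{n_k}} \sim d e^c \L_2 n_k$,
\[
\P(F_{n_{k-1}}^- \leq s_{n_k}) \leq \E M_{n_{k-1}}(s_{n_k}) \lesssim (\L n_k)^{d - 1 - d e^c},
\]
which is summable in $k$ for $c > 0$ (the exponent is $< -1$ and $\L n_k \sim k \ln 2$).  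The first Borel--Cantelli lemma gives $F_{n_{k-1}}^- > s_{n_k}$ eventually a.s.; monotonicity of $F^-$ (Lemma~\ref{L:-}) together with (eventual) monotonicity of $(s_n)$ then yield, for $n \in [n_{k-1}, n_k]$,
\[
F_n^- \geq F_{n_{k-1}}^- > s_{n_k} \geq s_n,
\]
establishing (c).

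\textbf{Second-moment arguments for (a, upper) and (b).}  For~(a, upper), $s_n^+ := \L n - \L_3 n - \L(d - 1) + K \L_3 n / \L_2 n$ gives $\E M_n(s_n^+) \sim (\L_2 n)^{K(d - 1)} \to \infty$; Paley--Zygmund combined with the variance bound $\Var M_n(s_n^+) = O(\E M_n(s_n^+))$ yields $\P(M_n(s_n^+) = 0) \lesssim 1/\E M_n(s_n^+) \to 0$.  For~(b), fix $r = 2$ and set $n_k := 2^k$, $b_k := a_{n_{k-1}}$; then $n_k e^{-b_k} \sim (d - 2) \L_2 n_k / e^c$, so
\[
\E M_{n_k}(b_k) \sim (\L n_k)^\alpha, \quad \alpha := d - 1 - (d - 2)/e^c,
\]
and the inequality $\alpha > 1$ holds precisely because $c > 0$ and $d \geq 3$.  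Paley--Zygmund then gives $\P(M_{n_k}(b_k) = 0) \lesssim 1/\E M_{n_k}(b_k) \lesssim k^{-\alpha}$, which is summable.  Borel--Cantelli produces $F_{n_k}^- < a_{n_{k-1}}$ eventually a.s., and for $n \in [n_{k-1}, n_k]$ the monotonicity of $F^-$ and $(a_n)$ give
\[
F_n^- \leq F_{n_k}^- < a_{n_{k-1}} \leq a_n,
\]
so $F_n^- < a_n$ eventually, establishing (b).

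\textbf{Main obstacle.}  The chief technical task is the variance bound $\Var M_n(s) = O(\E M_n(s))$, equivalently the second-moment estimate, which requires decomposing $\E[M_n(s)^2]$ over pairs of ordered $d$-tuples according to the number $j \in \{0, 1, \ldots, d\}$ of shared indices.  The $j = 0$ contribution factorizes to $(\E M_n(s))^2$, and the delicate work is showing that each $j \geq 1$ contribution is of strictly smaller order, despite the intricate coupling introduced by the combined current-record and non-domination side conditions for two overlapping tuples.  A secondary subtlety is the exact asymptotic constant in $\E M_n(s)$, which must be tracked carefully enough to produce the precise constants $\L(d - 1)$, $\L(d - 2) - \L 2$, and $\L d + \L 2$ in the statement.
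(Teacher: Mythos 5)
Your overall architecture (first moment for the lower bounds, second moment plus dyadic Borel--Cantelli for the upper bounds) matches the paper's, but three points need attention, two of which are genuine errors and one of which is a large unproven gap. First, the claimed almost-sure identity $\{F_n^- \leq s\} = \{M_n(s) \geq 1\}$ with $M_n$ counting only \emph{interior} generators is false: $F_n^-$ is the minimum coordinate-sum over \emph{all} generators, and the trailing point can in principle be a lower-dimensional generator, so only the inclusion $\{M_n(s)\ge 1\}\subseteq\{F_n^-\le s\}$ holds. The first-moment arguments for (a, lower) and (c) therefore need the full count $\gamma_n(b)$ of generators of every dimension (as in Proposition~\ref{P:-lower}); this is repairable via \refT{T:T}, since the non-interior generators are interior generators of lower-dimensional projections, but as written the Markov step is applied to the wrong count. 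Second, your mean asymptotic $\E M_n(s)\sim(\L n)^{d-1}e^{-ne^{-s}}$ omits a factor $(ne^{-s})^{d-1}/(d-1)!$: by \refL{L:exact} the interior-generator density carries $x_\times^{d-1}$, so the restriction $x_\times\ge e^{-s}$ produces a tail integral $\int_{\beta}^\infty z^{d-1}e^{-z}\,\dd z\sim\beta^{d-1}e^{-\beta}$ with $\beta=ne^{-s}=\Theta(\L_2 n)$, exactly as in \refL{L:UB mean hat}. This $\Theta((\L_2 n)^{d-1})$ factor is what forces the $O_p(\L_3 n/\L_2 n)$ remainder in part~(a): with the corrected mean, your choice $s_n^-$ gives $\E\gamma_n(s_n^-)\to 0$ only for $K>1$, not for every $K>0$. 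Parts (b) and (c) survive the missing polylogarithmic factor because the relevant exponents are strictly less than $-1$, but the crossover analysis for (a) as you present it is based on the wrong formula.

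The most serious issue is that your upper bounds ((a, upper) and (b)) rest entirely on the variance estimate $\Var M_n(s)=O(\E M_n(s))$ for the \emph{interior-generator} count, which you flag as ``the chief technical task'' but do not carry out. The paper deliberately avoids this: its second-moment method is applied to $\rho_n(b_n)$, the number of \emph{remaining records} with coordinate-sum at most $b_n$ (Lemma~\ref{L:UB variance}, itself a long appendix computation), yielding the upper bound for $\hF_n^-$ and hence for $F_n^-\le\hF_n^-$; the thresholds $-\L(d-1)$ and $-\L(d-2)+\L 2$ come out of $\E\rho_n(b_n)\sim(\L n)^{d-1-c_n}/(d-1)!$ with no polylog correction. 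A second-moment bound for generator counts would require controlling pairs of $d$-tuples of records with all overlap patterns, and even the lead-order variance of $\gamma_{d,n}$ is posed as an open problem in Remark~\ref{R:time}(c). So while your route could in principle give the same constants in (b), as it stands the upper-bound half of the theorem is unproven; the record-count route is both available in the paper and strictly easier.
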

\medskip

\refT{T:-} gives rise immediately to the following succinct corollary, where $O_p$ is big-oh in probability.
\medskip

\begin{corollary}
\label{C:-}
Consider the process $F^-$ defined at~\eqref{-+}.
\medskip

{\rm (a)~\underline{Typical behavior of $F^-$}:}
\[
F_n^- - (\L n - \L_3 n) \Pto - \L(d - 1)
\]
and thus
\[
\frac{F_n^- - \L n}{\L_3 n} \Pto -1
\]
and, yet more crudely,
\[
\frac{F_n^- - \L n}{\L_2 n} \Pto 0.
\]

{\rm (b)~\underline{Almost sure behavior for $F^-$}:}
\[
\lim \frac{F_n^- - \L n}{\L_2 n} = 0\mbox{\rm \ \as}
\]
Further, for fixed $d \geq 3$ we have the refinement
\[
F_n^- = \L n - \L_3 n + O(1)\mbox{\rm \ \as}
\]
\end{corollary}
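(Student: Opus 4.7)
This is a bookkeeping corollary of Theorem~\ref{T:-}; the work is purely arithmetic and there is no real obstacle, so the plan amounts to unpacking the $O_p$ estimate and the two ``infinitely often'' statements of the theorem into the scaled forms asked for.

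For part~(a), I would start from the representation $F_n^- = \L n - \L_3 n - \L(d-1) + O_p(\L_3 n / \L_2 n)$ supplied by Theorem~\ref{T:-}(a). Since $\L_3 n / \L_2 n \to 0$ deterministically, the error tends to $0$ in probability; subtracting $\L n - \L_3 n$ from both sides then gives the first limit, $F_n^- - (\L n - \L_3 n) \Pto -\L(d-1)$. Dividing the same identity by $\L_3 n$ (respectively, $\L_2 n$) and noting that $\L(d-1)/\L_3 n \to 0$ and $O_p(1/\L_2 n) \pto 0$ (respectively, $\L_3 n / \L_2 n \to 0$ and $O_p(\L_3 n / \L_2^2 n) \pto 0$) yields the other two limits.

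For part~(b), I would first establish the refinement $F_n^- = \L n - \L_3 n + O(1)$ a.s.\ in the case $d \geq 3$. Fix any $c > 0$: Theorem~\ref{T:-}(b) says that $\{F_n^- \geq \L n - \L_3 n - \L(d-2) + \L 2 + c\}$ occurs only finitely often a.s., and Theorem~\ref{T:-}(c) says the same for $\{F_n^- \leq \L n - \L_3 n - \L d - \L 2 - c\}$. Together these trap $F_n^- - (\L n - \L_3 n)$ between two deterministic constants eventually a.s., which is precisely the $O(1)$ statement; dividing by $\L_2 n$ then gives $\lim (F_n^- - \L n)/\L_2 n = 0$ a.s.\ for $d \geq 3$.

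The only piece not already covered is the $d = 2$ case of the a.s.\ limit, because Theorem~\ref{T:-}(b) is stated under $d \geq 3$. For this I would fall back on the older Theorem~\ref{T:-OLD}: its part~(b) gives $\limsup (F_n^- - \L n)/\L_2 n \leq 0$ a.s.\ (take $c \downto 0$), and its part~(c1) gives $\liminf (F_n^- - \L n)/\L_2 n \geq 0$ a.s.\ (using $\L_3 n / \L_2 n \to 0$), which together yield $\lim (F_n^- - \L n)/\L_2 n = 0$ a.s.\ and complete the proof.
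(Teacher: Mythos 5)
Your proposal is correct and follows exactly the route the paper intends: the corollary is stated as an immediate consequence of Theorem~\ref{T:-}, and your unpacking of the $O_p$ estimate for part~(a) and the two Borel--Cantelli-type bounds for the $d\ge 3$ refinement in part~(b) is precisely that. You are also right that the $d=2$ case of the a.s.\ limit needs Theorem~\ref{T:-OLD}(b) and~(c1) rather than Theorem~\ref{T:-}(b); the paper leaves this implicit (cf.\ the remark that Theorem~\ref{T:-OLD}(b) cannot be improved when $d=2$), and your fallback argument is the correct one.
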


\begin{remark}
We do not know how to improve \refT{T:-OLD}(b) when $d = 2$.
\end{remark}

Suppose now that instead of $F_n^-$ we consider the somewhat larger quantity
\begin{equation}
\label{hFn-def}
\hF_n^- := (\mbox{minimum coordinate-sum of any current record at time~$n$}).
\end{equation}
(See \refF{fig:frontierdefs}.)
Our second main theorem concerns the process $\hF^-$; in summary, the same results hold for $\hF^-$ as for $F^-$ in \refT{T:-}, with a sharper remainder term for $\hF^-$ in part~(a).

\begin{theorem}
\label{T:hF-}
Consider the process $\hF^-$ defined at~\eqref{hFn-def}.
\medskip

{\rm (a)~\underline{Typical behavior of $\hF^-$}:} 
\[
\hF_n^- = \L n - \L_3 n - \L(d - 1) + O_p\!\left( \frac{1}{\L_2 n} \right).
\] 
\medskip

{\rm (b)~\underline{Top outer boundaries for $\hF^-$}:\ } If $d \geq 3$, then
\[
\P(\hF_n^- \geq \L n - \L_3 n - \L(d - 2) + \L 2 + c\mbox{\rm \ \io}) = 0 
\mbox{\rm \ if\ }c > 0.
\]
\vspace{-.1in}

{\rm (c)~\underline{Bottom outer boundaries for $\hF^-$}:}
\[
\P(\hF_n^- \leq \L n - \L_3 n - \L d - \L 2 - c\mbox{\rm \ \io}) = 0\mbox{\rm\ if $c > 0$}.
\]
\end{theorem}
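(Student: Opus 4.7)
The plan is to reduce everything to analyzing the count
\[
N_n(b) := \#\{k \leq n : X^{(k)} \text{ is a current record at time } n,\ X^{(k)}_+ \leq b\},
\]
which satisfies $\hF_n^- \leq b \iff N_n(b) \geq 1$. Part~(c) then follows immediately from \refT{T:-}(c) together with the trivial containment $\{\hF_n^- \leq b\} \subseteq \{F_n^- \leq b\}$ (recall $\hF_n^- \geq F_n^-$), so all the real work goes into parts~(a) and~(b), which require direct first- and second-moment analysis of $N_n(b)$.

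For the first moment, exchangeability and the identity $\P(Y \succ x) = e^{-x_+}$ for iid $\Exp(1)$ give
\[
\E N_n(b) = \frac{n}{(d-1)!}\int_0^b s^{d-1} e^{-s}(1 - e^{-s})^{n-1}\,ds,
\]
which after the substitution $s = \L n + t$ (making $(1-e^{-s})^{n-1} \to e^{-e^{-t}}$) becomes
\[
\E N_n(b) \sim \frac{(\L n)^{d-1}}{(d-1)!}\,\exp\!\bigl(-e^{\L n - b}\bigr).
\]
Writing $b = \L n - \L_3 n - \L(d-1) + c$ turns the right side into $(\L n)^{(d-1)(1 - e^{-c})}/(d-1)!$, whose value is~$1$ at the critical offset $c^*_n \sim \L((d-1)!)/((d-1)\L_2 n) = O(1/\L_2 n)$. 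This identifies the centering $a_n := \L n - \L_3 n - \L(d-1)$ and the natural fluctuation scale $1/\L_2 n$ appearing in~(a). Markov's inequality applied at $c = -C/\L_2 n$ ($C$ large) forces $\E N_n(b) \to 0$, so $\P(\hF_n^- \leq b) \leq \E N_n(b) \to 0$, giving the lower half of~(a).

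For the upper half of~(a) we need $\P(N_n(b) = 0) \to 0$ at $c = +C/\L_2 n$, and I invoke the second-moment method $\P(N_n = 0) \leq \Var N_n/(\E N_n)^2$. This demands bounding
\[
\E[N_n(N_n-1)] = n(n-1)\iint f(x)f(y)\bigl(1 - e^{-x_+} - e^{-y_+} + e^{-(x \vee y)_+}\bigr)^{n-2}\, dx\, dy,
\]
the integration running over ordered pairs of incomparable points with coordinate-sums at most~$b$, with $f$ the $\Exp(1)^d$ density and $x \vee y$ the coordinatewise maximum. The main obstacle will be showing $\E N_n^2 = (1 + o(1))(\E N_n)^2 + O(\E N_n)$, i.e., that $N_n$ behaves essentially like a Poisson random variable; the delicate part is controlling the contribution from near-coincident pairs $(x, y)$, for which $(x \vee y)_+$ is only slightly larger than $\max(x_+, y_+)$ and which could a~priori inflate the second moment.

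For~(b), with $b_n := \L n - \L_3 n - \L(d-2) + \L 2 + c$ and $c > 0$ fixed, rather than applying the second-moment method pointwise I would work with the restricted count
\[
N^*_{n_{j+1}}(b_{n_j}) := \#\{k \leq n_j : X^{(k)} \text{ cur.\ rec.\ at time } n_{j+1},\ X^{(k)}_+ \leq b_{n_j}\}
\]
along the geometric subsequence $n_j := 2^j$: the key observation is that if this count is $\geq 1$, then such a witness~$X^{(k)}$ (having arrived by time~$n_j$ and survived through~$n_{j+1}$) is automatically a current record at every intermediate time $m \in [n_j, n_{j+1}]$, forcing $\hF_m^- \leq b_{n_j} \leq b_m$ across the whole dyadic window (here $b_n$ is increasing in~$n$). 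The same asymptotic calculation gives $\E N^*_{n_{j+1}}(b_{n_j}) \sim (\L n_j)^{d-1 - (d-2) e^{-c}}/(2(d-1)!)$, whose exponent exceeds~$1$ precisely when $d \geq 3$ and $c > 0$; a second-moment bound of the form $\P(N^*_{n_{j+1}}(b_{n_j}) = 0) = O((\L n_j)^{-(d-1-(d-2)e^{-c})}) = O((j\,\L 2)^{-(d-1-(d-2)e^{-c})})$ is then summable in~$j$, and Borel--Cantelli delivers~(b).
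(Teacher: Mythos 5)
Your skeleton is the paper's: your $N_n(b)$ is exactly the paper's count $\rho_n(b)$, your exact integral for $\E N_n(b)$ matches~\eqref{exact mean}, and your leading asymptotic $\E N_n(b)\sim\frac{(\L n)^{d-1}}{(d-1)!}\exp(-n e^{-b})$ agrees with \refL{L:mean hat} (whose $j\geq 1$ terms are smaller by factors of $\L_3 n/\L n$). The Markov half of~(a) is therefore fine, and your reduction of~(c) to \refT{T:-}(c) is legitimate and non-circular (it is precisely \refR{R:compare}; \refT{T:-}(c) is proved via generators, not via the present theorem), although the paper also gives a direct, easier proof of~(c) from the first-moment bound alone along the dyadic subsequence $n_j=2^j$, for which you already have every ingredient. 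For~(b), your restricted count $N^*$ is a workable variant of the paper's simpler device: monotonicity of $\hF^-$ gives $\{\hF_n^-\geq b_n\ \io(n)\}\subseteq\{\hF_{n_{j+1}}^-\geq b_{n_j}\ \io(j)\}$, so the paper needs Chebyshev only for the full count $\rho_{n_{j+1}}(b_{n_j})$. Note that your route cannot borrow that bound: since $N^*\leq\rho_{n_{j+1}}(b_{n_j})$, one has $\P(N^*=0)\geq\P(\rho_{n_{j+1}}(b_{n_j})=0)$, so you would have to redo the entire variance analysis for $N^*$ separately.

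The genuine gap is that variance analysis itself. Both the upper half of~(a) and all of~(b) rest on the estimate $\Var N_n(b_n)\leq(1+o(1))\,\E N_n(b_n)$ (the paper's \refL{L:UB variance}), and you name this as ``the main obstacle'' without proving it. It is not a routine step; it occupies all of \refApp{A:varproof}. One must split the incomparable-pair integral for the second factorial moment into a product part $K_n$ and a cross term $I_n$ carrying the overlap factor. Showing $K_n\leq(1+o(1))[\E\rho_n(b_n)]^2$ already forces one to know the mean with an additive error so small that its product with $\E\rho_n(b_n)=\Theta((\L n)^{d-1-c_n})$ is $o(1)$ --- this is exactly why \refL{L:mean hat} is proved with the remainder $O(n^{-1}(\L n)^{d-1-c_n}(\L_2 n)^2)$ rather than just a relative $1+o(1)$. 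And showing $I_n=o(\E\rho_n(b_n))$ requires the decomposition $I_n=\sum_{k=1}^{d-1}\binom{d}{k}I_{n,k}$ and delicate estimates of the resulting one-dimensional integrals near $x=1$, i.e., precisely the near-coincident pairs you correctly flag as dangerous. As written, your proposal establishes the lower half of~(a) and correctly reduces~(c), but the upper half of~(a) and part~(b) remain unproven.
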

\medskip

As a corollary, the process $\hF^-$ satisfies the same assertions as for $F^-$ in \refC{C:-}.
\ignore{
\refT{T:hF-} gives rise immediately to the following succinct corollary.
\medskip

\begin{corollary}
\label{C:hF-}
Consider the process $\hF^-$ defined at~\eqref{hFn-def}.
\medskip

{\rm (a)~\underline{Typical behavior of $\hF^-$}:}
\[
\hF_n^- - (\L n - \L_3 n) \Pto - \L(d - 1)
\]
and thus
\[
\frac{\hF_n^- - \L n}{\L_3 n} \Pto -1
\]
and, yet more crudely,
\[
\frac{\hF_n^- - \L n}{\L_2 n} \Pto 0.
\]

{\rm (b)~\underline{Almost sure behavior for $\hF^-$}:}
\[
\lim \frac{\hF_n^- - \L n}{\L_2 n} = 0\mbox{\rm \ \as}
\]
Further, for fixed $d \geq 3$ we have the refinement
\[
\hF_n^- - = \L n - \L_3 n + O(1)\mbox{\rm \ \as}
\]
\end{corollary}
}

\begin{remark}
Combining Theorems~\ref{T:-} and \ref{T:hF-}, we find that there is little difference between the two processes in the sense that
\[
\hF_n^- - F_n^- \Pto 0,
\]
because in fact $0 \leq \hF_n^- - F_n^- = O_p\!\left( \frac{\L_3 n}{\L_2 n} \right)$.
\end{remark}

\begin{remark}
\label{R:hF-limit}
Extending \refT{T:hF-}, we conjecture that
\begin{equation}
\label{hF-limit}
(\L_2 n) \left( \hF_n^- - [\L n - \L_3 n - \L(d - 1)] \right)
\end{equation}
has a nondegenerate limiting distribution.  This is discussed further in \refR{R:rholimit}.
\end{remark}

\subsection{Outline of paper}
\label{S:outline}

The proof of \refT{T:-} relies on \refT{T:hF-}, so we tackle the latter first.  In Sections
 \ref{S:LBh-}--\ref{S:UBh-} we apply the first moment method and the second moment method, respectively, to the number of remaining records with suitably small coordinate-sum; this leads to the proof of \refT{T:hF-} in \refS{S:proofh-}.  In Sections~\ref{S:char}--\ref{S:expected} we review and extend the theory of generators developed in~\cite{Fillgenerating(2018)}.  In \refS{S:LB-} we apply the first moment method to the number of generators with suitably small coordinate sum; this, together with the upper bounds on $\hF^-$ in \refT{T:hF-}, leads to the proof of \refT{T:-} in \refS{S:proof-}.
 
\begin{remark}
\label{R:compare}
Because $F_n^- \leq \hF_n^-$, \refT{T:-}(b) follows immediately from \refT{T:hF-}(b), as does \refT{T:hF-}(c) from \refT{T:-}(c).
\end{remark}

{\bf More notation:\ }Throughout the paper, the boundaries we consider will without exception have the form
\begin{equation}
\label{bndefhat}
b_n := \L n - \L_3 n - \L c_n\mbox{\ with $c_n > 0$ and $c_n = \Theta(1)$}.
\end{equation}
Also, we will often use the notation
\begin{equation}
\label{gbdef}
\gb_n := n e^{ - b_n}
\end{equation}
The dimension $d \geq 2$ will always remain fixed as $n \to \infty$.

\section{Stochastic lower bound on $\hF^-_n$ via the first moment method}
\label{S:LBh-}

In this section we show how to obtain a suitable stochastic lower bound on $\hF_n^-$.  See \refP{P:h-lower} for the result.  The idea, for a suitably chosen sequence $(b_n)$ is to apply the first moment method (computation of sufficiently small mean, together with application of Markov's inequality) to the count 
$\rho_n(b_n)$, where
\begin{equation}
\label{rhonbndef}
\rho_n(b) := \#\{ \mbox{remaining records~$r$ at epoch~$n$ with $r_+ \leq b$} \}.
\end{equation}
Asymptotic determination of the mean is obtained by 
suitably modifying the asymptotic determination of the mean of $\rho_n = \rho_n(\infty)$ in 
\cite[Section~2]{Bai(2005)}.

\subsection{Upper (and lower) asymptotic bound(s) on mean}
\label{S:mean hat}

In the next lemma we determine detailed asymptotics for the mean of 
$\rho_n(b_n)$ when $(b_n)$ is a boundary of interest in establishing Theorems~\ref{T:-} and~\ref{T:hF-}.  The proof is rather elementary, but we defer it to \refApp{A:meanproof}.  We define
\begin{equation}
\label{Jjdef}
J_j(x) := \int_x^{\infty} (\L z)^j e^{-z} \dd z.
\end{equation}
and note that $J_j(x) \sim (\L x)^j e^{-x}$ as $x \to \infty$.

\begin{lemma}
\label{L:mean hat}
With the notation and assumptions of~\eqref{bndefhat}--\eqref{gbdef} and~\eqref{Jjdef}, as $n \to \infty$ we have
\begin{align}
\lefteqn{\E \rho_n(b_n)} \nonumber \\ 
&= [1 + O(n^{-1} (\L_2 n)^2)]
\frac{1}{(d - 1)!} \sum_{j = 0}^{d - 1} (-1)^j \binom{d - 1}{j} (\L n)^{d - 1 - j} J_j(\gb_n), \label{meanhat}
\end{align}
or, equivalently,
\begin{align}
\E \rho_n(b_n)
&= \frac{1}{(d - 1)!} \sum_{j = 0}^{d - 1} (-1)^j \binom{d - 1}{j} (\L n)^{d - 1 - j} J_j(\gb_n) \nonumber \\ 
&{} {} \qquad \qquad + O(n^{-1} (\L n)^{d - 1 - c_n} (\L_2 n)^2). \label{meanhatalt} 
\end{align}
\end{lemma}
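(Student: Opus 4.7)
The plan is to derive an exact one-dimensional integral representation for $\E\rho_n(b_n)$ and then analyze it through the substitution $t=ne^{-s}$, a binomial expansion, and a careful Laplace-type approximation of $(1-t/n)^{n-1}$ by $e^{-t}$.

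First I would compute $\E\rho_n(b_n)$ using exchangeability. For any $i\in[n]$ the event ``$X^{(i)}$ is a remaining record at time~$n$ and $X^{(i)}_+\leq b_n$'' depends only on $x:=X^{(i)}$, and conditional on $X^{(i)}=x$ the remaining-record probability is $(1-e^{-x_+})^{n-1}$, because for $j\neq i$ the $\Exp(1)$-coordinates give $\P(X^{(j)}\succ x)=e^{-x_+}$. Since $X^{(1)}_+$ has the Gamma$(d,1)$ density $s^{d-1}e^{-s}/(d-1)!$ on $(0,\infty)$, summing over $i$ yields
\[
\E\rho_n(b_n)=\frac{n}{(d-1)!}\int_0^{b_n}(1-e^{-s})^{n-1}e^{-s}s^{d-1}\dd s.
\]
The change of variables $t=ne^{-s}$ (so $s=\L n-\L t$, $\dd s=-\dd t/t$, and the range becomes $t\in[\gb_n,n]$) collapses the factor $n/(d-1)!$ against the Jacobian and gives
\[
\E\rho_n(b_n)=\frac{1}{(d-1)!}\int_{\gb_n}^n(1-t/n)^{n-1}(\L n-\L t)^{d-1}\dd t.
\]
Binomially expanding $(\L n-\L t)^{d-1}=\sum_{j=0}^{d-1}(-1)^j\binom{d-1}{j}(\L n)^{d-1-j}(\L t)^j$ reduces the problem to showing
\[
I_{n,j}:=\int_{\gb_n}^n(1-t/n)^{n-1}(\L t)^j\dd t = \bigl[1+O(n^{-1}(\L_2 n)^2)\bigr]J_j(\gb_n),\quad 0\leq j\leq d-1.
\]

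Next I would replace $(1-t/n)^{n-1}$ by $e^{-t}$. Expanding the logarithm gives $(n-1)\ln(1-t/n)+t=t/n-t^2/(2n)+O(t^3/n^2)$ uniformly on $t\in[0,n/2]$, hence
\[
(1-t/n)^{n-1}=e^{-t}\bigl[1+O((t+t^2)/n)\bigr]\quad\text{for }t\in[\gb_n,n/2].
\]
The tail $t\in(n/2,n)$ contributes only super-polynomially little because $(1-t/n)^{n-1}$ is there bounded by $2^{-(n-1)}$ while $(\L t)^j\leq(\L n)^j$. Similarly, extending the upper limit from $n$ to $\infty$ in the main term costs only $\int_n^\infty e^{-t}(\L t)^j\dd t=O(e^{-n}(\L n)^j)$, which is negligible. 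Thus
\[
I_{n,j}=J_j(\gb_n)+O\!\left(n^{-1}\int_{\gb_n}^\infty (t+t^2)(\L t)^j e^{-t}\dd t\right)+\text{super-polynomially small},
\]
and using $\gb_n=c_n\L_2 n\to\infty$ together with the integration-by-parts estimate $\int_x^\infty t^2(\L t)^j e^{-t}\dd t\sim x^2(\L x)^j e^{-x}$ as $x\to\infty$, the displayed remainder is $O(\gb_n^2\,n^{-1})\,J_j(\gb_n)=O(n^{-1}(\L_2 n)^2)\,J_j(\gb_n)$.

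Finally, because the $j=0$ term dominates the sum on the right-hand side of~\eqref{meanhat}—specifically, $(\L n)^{d-1-j}J_j(\gb_n)/[(\L n)^{d-1}J_0(\gb_n)]=O((\L_3 n/\L n)^j)$ by the asymptotics $J_j(x)\sim(\L x)^j e^{-x}$—no cancellation occurs between summands, so the uniform relative error $O(n^{-1}(\L_2 n)^2)$ in each $I_{n,j}$ transfers to the signed sum and produces~\eqref{meanhat}. Converting this multiplicative form into the additive form~\eqref{meanhatalt} just requires multiplying the relative error by the leading term $\frac{(\L n)^{d-1}}{(d-1)!}J_0(\gb_n)=\frac{(\L n)^{d-1}}{(d-1)!}e^{-\gb_n}$, which has size $\Theta((\L n)^{d-1-c_n})$ since $\gb_n=c_n\L_2 n$ gives $e^{-\gb_n}=(\L n)^{-c_n}$. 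The only real obstacle is bookkeeping: maintaining the quadratic factor $\gb_n^2\sim(\L_2 n)^2$ as the correct relative-error scale and confirming that the various ``off-main'' contributions—the tail $(n/2,n)$, the extension of the upper limit to $\infty$, and the $j\geq 1$ summands—are all smaller than this benchmark.
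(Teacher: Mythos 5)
Your proposal is correct and follows essentially the same route as the paper's proof in Appendix~A: the exact Gamma-density integral \eqref{exact mean}, the substitution $t=ne^{-s}$, the binomial expansion into the $J_j$ terms, and the identification of the dominant relative error $n^{-1}\int_{\gb_n}^{\infty}t^2(\L t)^je^{-t}\dd t=O(n^{-1}\gb_n^2)J_j(\gb_n)=O(n^{-1}(\L_2 n)^2)J_j(\gb_n)$. The only difference is cosmetic: where you expand $\log(1-t/n)$ directly to get a symmetric multiplicative factor $1+O((t+t^2)/n)$, the paper invokes the two-sided inequality $e^{-nt}(1-nt^2)\le(1-t)^n\le e^{-nt}$ and runs separate upper- and lower-bound arguments (with a small index shift handled via \eqref{Jjdiff}), arriving at the same error benchmark.
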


\begin{remark}
We need only lead-order asymptotics for the mean in this section, but (as we shall see in the proof of~\refL{L:UB variance}) we require much more detailed asymptotics for it in the next section---asymptotics with an additive $o(1)$ remainder term, as we have in~\eqref{meanhatalt}.
\end{remark}

We are now in position to apply Markov's inequality to bound the probability of the event $\{\hF^-_n \leq b_n\} = \{\rho_n(b_n) \geq 1\}$.

\begin{proposition}[\underline{Stochastic lower bound on $\hF_n^-$}]
\label{P:h-lower}
With the notation and assumptions of~\eqref{bndefhat}, as $n \to \infty$ we have
\begin{align*}
\P(\hF^-_n \leq b_n) 
&\leq \E \rho_n(b_n) 
= (1 + o(1))\,\frac{1}{(d - 1)!} (\L n)^{d - 1 - c_n}.
\end{align*}
\end{proposition}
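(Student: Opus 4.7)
The proposition has two parts: a probability inequality and an asymptotic evaluation. My plan is to handle them in that order, since the inequality is a two-line observation and the real work sits in the asymptotic.

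\textbf{Step 1: Reduction to a first-moment bound.} By the definition \eqref{hFn-def} of $\hF_n^-$ and \eqref{rhonbndef} of $\rho_n(\cdot)$, the event $\{\hF_n^- \leq b_n\}$ coincides exactly with $\{\rho_n(b_n) \geq 1\}$. Since $\rho_n(b_n)$ is a nonnegative integer-valued random variable, Markov's inequality gives
\[
\P(\hF_n^- \leq b_n) = \P(\rho_n(b_n) \geq 1) \leq \E \rho_n(b_n).
\]

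\textbf{Step 2: Substitution into Lemma \ref{L:mean hat}.} From \eqref{bndefhat} and \eqref{gbdef} we have $\gb_n = n e^{-b_n} = c_n \L_2 n$, so since $c_n = \Theta(1)$, $\gb_n \to \infty$ and we may use the asymptotic $J_j(x) \sim (\L x)^j e^{-x}$ from just after \eqref{Jjdef}. Here
\[
\L \gb_n = \L_3 n + \L c_n = (1 + o(1))\,\L_3 n, \qquad e^{-\gb_n} = (\L n)^{-c_n},
\]
and therefore $J_j(\gb_n) = (1 + o(1)) (\L_3 n)^j (\L n)^{-c_n}$ for each fixed $j \in \{0, 1, \dots, d - 1\}$.

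\textbf{Step 3: Identifying the dominant term.} Plug these asymptotics into \eqref{meanhatalt}. The $j$-th summand contributes
\[
(1+o(1))\,(-1)^j \binom{d-1}{j} (\L n)^{d-1-j} (\L_3 n)^j (\L n)^{-c_n},
\]
which, for $j \geq 1$, is smaller than the $j=0$ term by a factor of order $(\L_3 n / \L n)^j = o(1)$. The additive error $O(n^{-1}(\L n)^{d-1-c_n}(\L_2 n)^2)$ in \eqref{meanhatalt} is also $o((\L n)^{d-1-c_n})$ since $n^{-1}(\L_2 n)^2 = o(1)$. Hence
\[
\E \rho_n(b_n) = (1 + o(1))\,\frac{1}{(d-1)!}(\L n)^{d-1-c_n},
\]
which combined with Step~1 is the claim.

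The only possible obstacle is verifying that the lower-order ($j \geq 1$) terms are genuinely negligible uniformly over the range $c_n = \Theta(1)$; but since $d$ is fixed, this is just a finite sum and the ratio $(\L_3 n/\L n)^j$ tends to zero regardless. The substantive content is really contained in Lemma~\ref{L:mean hat}, which we are permitted to invoke, so the proposition follows essentially by a direct computation.
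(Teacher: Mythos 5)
Your proof is correct and follows exactly the route the paper intends: the identity $\{\hF_n^- \leq b_n\} = \{\rho_n(b_n) \geq 1\}$ plus Markov's inequality, followed by extraction of the dominant $j=0$ term from Lemma~\ref{L:mean hat} using $\gb_n = c_n \L_2 n$ and $e^{-\gb_n} = (\L n)^{-c_n}$. The paper leaves these details implicit, so nothing more is needed.
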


\section{Stochastic upper bound on $\hF^-_n$ via 
second moment method}
\label{S:UBh-}

In this section we show how to obtain a suitable stochastic upper bound on $\hF^-_n$ 
(and thus also on $F^-_n$).  
See \refP{P:-upper} for the result.  The idea, for 
a suitably chosen sequence $(b_n)$, is to apply the second moment method (computation of sufficiently large mean and sufficiently small variance, together with application of Chebyshev's inequality) to the count 
$\rho_n(b_n)$ [recall the definition~\eqref{rhonbndef}], which almost surely equals
\begin{equation}
\label{rhoonbndef}
\rho^{\circ}_n(b_n) := \#\{ \mbox{remaining records~$r$ at epoch~$n$ with $r_+ < b_n$} \}.
\end{equation}
For the mean, we will use \refL{L:mean hat}.
The bound on the variance of $\rho_n(b_n)$ is obtained by suitably modifying the already quite technical asymptotic determination of the variance of $\rho_n = \rho_n(\infty)$ in \cite[Section~2]{Bai(2005)}; the determination here is quite a bit more technical still.

\subsection{Upper bound on variance}
We next show that the standard deviation of $\rho_n(b_n)$ is of smaller order of magnitude than the 
mean---and by enough so that our proof (in \refS{S:proofh-}) of \refT{T:-}(b) (for $\hF^-$, which implies the result for $F^-$) using the first Borel--Cantelli lemma will succeed.  The rather long and rather computationally technical proof of the following result is deferred to \refApp{A:varproof}, where the reverse inequality (not needed in this paper) is also established.

\begin{lemma}
\label{L:UB variance}
With the notation and assumptions of~\eqref{bndefhat}, as $n \to \infty$ we have
\begin{equation}
\label{varUB}
\Var \rho_n(b_n) \leq (1 + o(1)) \E \rho_n(b_n).
\end{equation}
\end{lemma}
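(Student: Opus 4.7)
The plan is to apply the standard second-moment method reduction. Writing $\rho_n(b_n) = \sum_{i=1}^n I_i$, where $I_i$ is the indicator that $X^{(i)}$ is a remaining record at time~$n$ with coordinate sum at most $b_n$, by exchangeability one has
\[
\Var \rho_n(b_n) = \E \rho_n(b_n) - \frac{[\E \rho_n(b_n)]^2}{n} + n(n-1)[\E(I_1 I_2) - (\E I_1)^2].
\]
Since \refL{L:mean hat} gives that $\E \rho_n(b_n)$ is of order $(\L n)^{d - 1 - c_n}$ and hence $o(n)$, the $-(\E \rho_n(b_n))^2/n$ term is harmless for an \emph{upper} bound, and the goal reduces to showing that $n(n-1)[\E(I_1 I_2) - (\E I_1)^2] \leq o(\E \rho_n(b_n))$.

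Next, I would write both terms as explicit $2d$-dimensional integrals. Conditioning on $X^{(1)} = x$ and $X^{(2)} = y$ and using the independence of $X^{(3)}, \dots, X^{(n)}$ yields
\[
\E(I_1 I_2) = \int\!\!\int_{A_n}\! e^{-x_+ - y_+}\bigl( 1 - e^{-x_+} - e^{-y_+} + e^{-(x \vee y)_+} \bigr)^{n-2}\, dx\, dy,
\]
with $A_n := \{(x,y) \in \bbR_+^{2d}:\, x_+, y_+ \leq b_n,\ x \not\prec y,\ y \not\prec x\}$, and
\[
(\E I_1)^2 = \int\!\!\int_{B_n}\! e^{-x_+ - y_+}(1 - e^{-x_+})^{n-1}(1 - e^{-y_+})^{n-1}\, dx\, dy,
\]
where $B_n$ drops the mutual non-dominance constraint. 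The key algebraic identity $(x \vee y)_+ = x_+ + y_+ - (x \wedge y)_+$ yields $e^{-(x \vee y)_+} = e^{-x_+ - y_+}\, e^{(x \wedge y)_+}$, so that
\[
1 - e^{-x_+} - e^{-y_+} + e^{-(x \vee y)_+} = (1 - e^{-x_+})(1 - e^{-y_+}) + e^{-x_+ - y_+}\bigl( e^{(x \wedge y)_+} - 1 \bigr).
\]
This identity reveals that the dependence between the two remaining-record events is driven entirely by the $e^{(x \wedge y)_+} - 1$ factor: when the overlap $s := (x \wedge y)_+$ is small (as is typical), the two events are essentially independent.

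The covariance then splits into a contribution from $A_n$ (the main positive term) and a contribution from the dominating-pair region $B_n \setminus A_n$ (which enters with a negative sign and is therefore automatically favorable for an upper bound). On $A_n$ I would expand $(1 + \alpha)^{n - 2}$ with $\alpha := e^{-x_+ - y_+}(e^s - 1)/[(1 - e^{-x_+})(1 - e^{-y_+})]$ via a Taylor estimate, then perform the substitution $u_j := 1 - e^{-x_j}$, $v_j := 1 - e^{-y_j}$ for $j \in [d]$ to reduce everything to polynomial integrals over $[0, 1]^{2d}$, as in \cite[Section~2]{Bai(2005)}. The main obstacle is that the truncation $x_+, y_+ \leq b_n$ does not respect the product structure of the transformed integrand, so the symmetry that Bai et al.\ exploit is broken and boundary effects have to be handled explicitly---this is where the ``quite a bit more technical still'' language comes in. Considerable bookkeeping is required to show that after cancellations the covariance integral is $o((\L n)^{d - 1 - c_n})$, from a combination of terms each individually only of size $O((\L n)^{d - 1 - c_n})$; this is precisely why the $o(1)$-accurate asymptotic expansion recorded in~\eqref{meanhatalt}, rather than mere leading-order asymptotics, is the version of the mean formula needed here.
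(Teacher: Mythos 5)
Your reduction and setup coincide with the paper's strategy in all essentials: the paper likewise passes to the second (factorial) moment, represents $\E(I_1I_2)$ as a $2d$-fold integral over mutually non-dominating pairs subject to the truncation $x_+, y_+ \leq b_n$, and isolates the dependence between the two record events through the identity $(x\vee y)_+ = x_+ + y_+ - (x\wedge y)_+$, splitting the resulting integral into a ``product'' piece (the paper's $K_n$, compared to $[\E\rho_n(b_n)]^2$ after dropping the non-dominance restriction --- and your diagnosis that the $o(1)$-accurate expansion~\eqref{meanhatalt} is needed precisely so that the relative error in this comparison, multiplied by $\E\rho_n(b_n)$, is $o(1)$, matches the paper's own remark) and a ``correction'' piece (the paper's $I_n$) driven by the overlap factor. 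One small simplification you miss: since only an upper bound on the variance is claimed, the one-sided inequality $(1-t)^n \leq e^{-nt}$ applied to the whole bracket suffices, and no Taylor expansion of $(1+\alpha)^{n-2}$ is needed.

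The genuine gap is that the entire content of the lemma resides in the step you defer to ``considerable bookkeeping,'' namely proving that the correction term is $o(\E\rho_n(b_n)) = o((\L n)^{d-1-c_n})$. This is not routine, and it is not a cancellation phenomenon as you suggest: in the paper the correction $I_n$ is bounded \emph{directly} and shown to be of strictly smaller order than the mean, by decomposing $I_n = \sum_{k=1}^{d-1}\binom{d}{k}I_{n,k}$ according to which $k$ coordinates of $x\wedge y$ come from $x$, collapsing each $I_{n,k}$ by a change of variables to a three-fold integral in which the truncation reappears as the lower limits $yx \geq \tgb_n$ and $yz \geq \tgb_n$ with $\tgb_n = \Theta(\L_2 n)$, evaluating the inner $y$-integral in closed form, and estimating the resulting function $D_n(x)$ separately on $(0,1/2]$ and $[1/2,1)$. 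The outcome is $I_{n,k} = O\bigl((\L n)^{d-1}\,\tgb_n^{-(d-1)}e^{-\tgb_n}\bigr)$, i.e.\ a gain of a negative power of $\L_2 n$ over $\E\rho_n(b_n)$ that comes entirely from those truncation-induced lower limits. This gain is essential and cannot be waved through: for the untruncated count $\rho_n$ in Bai et al.\ the analogous overlap term contributes at the same order as the mean (which is why $\Var\rho_n$ is not asymptotic to $\E\rho_n$ there), so some quantitative argument exploiting the restriction $x_+, y_+ \leq b_n$ must be supplied, and your proposal contains none.
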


\subsection{Stochastic upper bound on $\hF_n^-$}
\label{S: UB hFn-}

We are now in position to utilize Chebyshev's inequality to provide a bound
on $\P(\hF^-_n \geq b_n) = \P(\rho^{\circ}_n(b_n) = 0) = \P(\rho_n(b_n) = 0)$.

\begin{proposition}[\underline{Stochastic upper bound on $\hF_n^-$}]
\label{P:-upper}
With the notation and assumptions of~\eqref{bndefhat}, as $n \to \infty$ we have
\begin{align*}
\P(F^-_n \geq b_n) 
\leq \P(\hF^-_n \geq b_n) 
&\leq (1 + o(1))\,(d - 1)! (\L n)^{ - (d - 1 - c_n)} \\
&= O((\L n)^{ - (d - 1 - c_n)}).
\end{align*}
\end{proposition}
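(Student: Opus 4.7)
The plan is to deduce \refP{P:-upper} by a straightforward application of the second moment (Paley--Zygmund / Chebyshev) method to the count $\rho_n(b_n)$, using the two ingredients already at hand: the mean asymptotics from \refL{L:mean hat} (equivalently \refP{P:h-lower}) and the variance bound from \refL{L:UB variance}. Essentially all of the real work was done in establishing those two lemmas; the present proposition is a short consequence.

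First, I would record the trivial reduction. The inequality $\P(F_n^- \ge b_n) \le \P(\hF_n^- \ge b_n)$ is immediate from the comparison $F_n^- \le \hF_n^-$ already noted in \refR{R:compare} (since the trailing point of the frontier has coordinate-sum at most that of any current record). Next, by definition of $\hF_n^-$ we have $\{\hF_n^- \ge b_n\} = \{\rho_n^{\circ}(b_n) = 0\}$, and the remark surrounding~\eqref{rhoonbndef} shows $\rho_n^{\circ}(b_n) = \rho_n(b_n)$ almost surely (since, with $X$ having a joint density, with probability one no current record lies exactly on the hyperplane $x_+ = b_n$). Hence $\P(\hF_n^- \ge b_n) = \P(\rho_n(b_n) = 0)$.

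Now apply Chebyshev's inequality in the standard ``second moment method'' form: for any nonnegative integer-valued random variable $N$ with $\E N > 0$,
\[
\P(N = 0) \le \P(|N - \E N| \ge \E N) \le \frac{\Var N}{(\E N)^2}.
\]
Taking $N = \rho_n(b_n)$ and combining \refL{L:UB variance} with the mean asymptotics
\[
\E \rho_n(b_n) = (1 + o(1))\,\tfrac{1}{(d-1)!}(\L n)^{d - 1 - c_n}
\]
from \refL{L:mean hat} (and as restated in \refP{P:h-lower}), we obtain
\[
\P(\rho_n(b_n) = 0) \le \frac{(1 + o(1))\,\E \rho_n(b_n)}{(\E \rho_n(b_n))^2} = \frac{1 + o(1)}{\E \rho_n(b_n)} = (1 + o(1))\,(d - 1)!\,(\L n)^{ - (d - 1 - c_n)},
\]
which is the asserted bound. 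For this step to be nonvacuous, one needs $\E \rho_n(b_n) \to \infty$, which is guaranteed under~\eqref{bndefhat} because $c_n = \Theta(1)$ and $d \ge 2$ make the exponent $d - 1 - c_n$ eventually bounded away from what would cause the mean to vanish; in any case, when $\E \rho_n(b_n)$ fails to be large the stated bound $O((\L n)^{ - (d - 1 - c_n)})$ is trivial.

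There is no real obstacle here: the difficulty of the second moment method has already been absorbed into the asymptotic variance calculation of \refL{L:UB variance} (deferred to \refApp{A:varproof}). The only thing to be careful about is the almost sure identification $\rho_n(b_n) = \rho_n^{\circ}(b_n)$ and the verification that $\E \rho_n(b_n)$ is large enough for Chebyshev's inequality to give useful information; both are immediate from the setup in~\eqref{bndefhat}.
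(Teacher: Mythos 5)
Your proposal is correct and matches the paper's proof essentially verbatim: the trivial reduction $F_n^- \leq \hF_n^-$, the identification $\P(\hF_n^- \geq b_n) = \P(\rho_n(b_n) = 0)$ via $\rho_n^{\circ}(b_n)$, and the Chebyshev bound $\Var \rho_n(b_n)/[\E \rho_n(b_n)]^2 \leq (1+o(1))[\E\rho_n(b_n)]^{-1}$ using Lemmas~\ref{L:UB variance} and~\ref{L:mean hat}. Your added remark about the bound being trivially valid when the mean is not large is harmless and consistent with the fact that the Chebyshev inequality needs only $\E\rho_n(b_n) > 0$.
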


\begin{proof}
The first asserted inequality follows because $F^-_n \leq \hF^-_n$.  Moreover,
using Chebyshev's inequality, \refL{L:UB variance}, and \refL{L:mean hat}, we find
\begin{align*}
\P(\hF^-_n \geq b_n) 
&\leq \P(\rho_n(b_n) = 0) = \P(\rho_n(b_n) - \E \rho_n(b_n) \leq - \E \rho_n(b_n)) \\
&\leq \frac{\Var \rho_n(b_n)}{[\E \rho_n(b_n)]^2}
\leq (1 + o(1))\,[\E \rho_n(b_n)]^{-1} \\
&= (1 + o(1))\,(d - 1)! (\L n)^{ - (d - 1 - c_n)} \\
&= O((\L n)^{ - (d - 1 - c_n)}),
\end{align*}
as desired.
\end{proof}

\begin{remark}
\label{R:rholimit}
\refL{L:UB variance} and
the reverse inequality of \refR{R:varlower} suggest that the law of $\rho_n(b_n)$ might be well approximated by a Poisson distribution with the same mean, but, after attempts using the Stein--Chen method (see, \eg,\ \cite{Barbour_H_J_1992}) or the method of moments, we have been unable to prove such an approximation even in the case that $\E \rho_n(b_n)$ has a limit $\gl \in (0, \infty)$.  For fixed~$a \in \bbR$, let $R_n(a)$ denote $\rho_n(b_n)$ when
\begin{equation}
\label{bna}
b_n = \L n - \L_3 n - \L(d - 1) + \frac{a}{\L_2 n},
\end{equation}
\ie,\ when $c_n = (d - 1) e^{ - a / \L_2 n}$ in~\eqref{bndefhat}.  Even if a Poisson approximation should fail, we certainly conjecture that $R_n(a)$ converges in distribution to a nondegenerate $R(a)$ as $n \to \infty$ with 
$\P(R(a) = 0)$ continuous and strictly decreasing in~$a$.  In that case, it follows that~\eqref{hF-limit} has limiting distribution function $a \mapsto \P(R(a) \geq 1)$.

In particular, if $R(a)$ is Poisson distributed for every~$a$, then~\eqref{hF-limit} converges in distribution to 
$ - G^*$, where $G^*$ has a Gumbel distribution with location $ - \frac{\L[(d - 1)!]}{d - 1}$ and scale 
$\frac{1}{d - 1}$. 
\end{remark}

\section{Proof of \refT{T:hF-}}
\label{S:proofh-}
In this section we prove \refT{T:hF-}.

\begin{proof}[Proof of \refT{T:hF-}]
\ \vspace{-.1in}\\

(a)~This follows readily from Propositions~\ref{P:h-lower} and \ref{P:-upper}.  Here are some details.  For $a \in \bbR$, let
\begin{equation}
\label{bna again}
b_n(a) := \L n - \L_3 n - \L(d - 1) + \frac{a}{\L_2 n},
\end{equation}
as at~\eqref{bna}; this is an instance of~\eqref{bndefhat} with $c_n = (d - 1) e^{ - a / \L_2 n}$.  By \refP{P:h-lower},
\[
\P(\hF^-_n \leq b_n(a)) \leq (1 + o(1)) \frac{1}{(d - 1)!} (\L n)^{d - 1 - c_n} \to \frac{1}{(d - 1)!} e^{(d - 1) a};
\]
the last expression here tends to $0$ as $a \to - \infty$.  Similarly, by \refP{P:-upper},
\[
\P(\hF^-_n \geq b_n(a)) \leq (1 + o(1)) (d - 1)! (\L n)^{- (d - 1 - c_n)} \to (d - 1)! e^{- (d - 1) a},
\]
and the last expression here tends to $0$ as $a \to \infty$.  It follows that the sequence of distributions of~\eqref{hF-limit} is tight, \ie,\ that \refT{T:hF-}(a) holds. 

(b)~Like $F^-$ (\refL{L:-}), the process $\hF^-$ has nondecreasing sample paths.  From this it follows that if 
$(b_n)$ is (ultimately) monotone nondecreasing and $(n_j)$ is any strictly increasing sequence of positive integers, then
\[
\{\hF_n^- \geq b_n\mbox{\rm\ \io$(n)$}\} \subseteq \{\hF_{n_{j + 1}}^- \geq b_{n_j}\mbox{\rm\ \io$(j)$}\}.
\]
To complete the proof of part~(b), we choose $b_n \equiv \L n - \L_3 n - \L(d - 2) + \L 2 + c$ with 
$c > 0$ and $n_j \equiv 2^j$, bound $\P(\hF_{n_{j + 1}}^- \geq b_{n_j})$ using 
\refP{P:-upper}, and apply the first Borel--Cantelli lemma.

Here are the details.  If $n$ is even, then 
\begin{align*}
b_{n / 2} 
&= \L(n / 2) - \L_3(n / 2) - \L(d - 2) + \L 2 + c \\
&= \L n - \L_3(n / 2) - \L(d - 2) + c\\ 
&\geq \L n - \L_3 n - \L(d - 2) + c,
\end{align*} 
the last expression being the one in \refP{P:-upper} with $c_n \equiv e^{-c} (d - 2)$.  Thus, by that proposition,
\begin{align*}
\P(\hF_{n_{j + 1}}^- \geq b_{n_j}) 
&= \P(\hF_{n_{j + 1}}^- \geq b_{n_{j + 1} / 2}) \\
&\leq \P(\hF_{n_{j + 1}}^- \geq \L n_{j + 1} - \L_3 n_{j + 1} - \L(d - 2) + c) \\
&= O((\L n_{j + 1})^{- [d - 1 - e^{-c} (d - 2)]}) = O((j + 1)^{- [1 + (1 - e^{-c}) (d - 2)]}),  
\end{align*}
which is summable.

(c)~To prove part~(c) [which, as noted in \refR{R:compare}, will also follow immediately once we prove \refT{T:-}(c)], we begin with an argument similar to that for part~(b).  
If $(b_n)$ is (ultimately) monotone nondecreasing and $(n_j)$ is any strictly increasing sequence of positive integers, then
\[
\{F_n^- \leq b_n\mbox{\rm\ \io$(n)$}\} \subseteq \{F_{n_j}^- \leq b_{n_{j + 1}}\mbox{\rm\ \io$(j)$}\}.
\]
To complete the proof of part~(c), we choose $b_n \equiv \L n - \L_3 n - \L d - \L 2 - c$ with 
$c > 0$ and $n_j \equiv 2^j$, bound $\P(F_{n_j}^- \leq b_{n_{j + 1}})$ using \refP{P:h-lower}, and apply the first Borel--Cantelli lemma.

Here are the details.  First note that
\[
b_{2 n} = \L(2 n) - \L_3(2 n) - \L d - \L 2 - c \leq \L n - \L_3 n - \L d - c,
\] 
the bounding expression being the one in \refP{P:h-lower} with $c_n \equiv e^c d$.  Thus, by that proposition,
\begin{align*}
\P(F_{n_j}^- \leq b_{n_{j + 1}})
&\leq \P(F_{n_j}^- \leq \L n_j - \L_3 n_j - \L d  - c) \\
&= O((\L n_j)^{- [e^c d - (d - 1)]}) = O(j^{- [1 + (e^c - 1) d]}), 
\end{align*}
which is summable.
\end{proof}

\section{Characterization of generators}
\label{S:char}

The 
unpublished manuscript~\cite{Fillgenerating(2018)} by Fill and Naiman developed the concept of generators of multivariate records mainly in connection with an importance-sampling algorithm for generating (simulating) records.  We shall find the same concept crucial for our improvement \refT{T:-}(c) to 
\refT{T:-OLD}(c2), the latter of which was established using a quite different idea, namely, a certain geometric lemma \cite[Lemma~3.1]{Fillboundary(2020)}.  Accordingly, in this section and the next we review and extend the theory of generators developed in~\cite{Fillgenerating(2018)}.  In this section we provide a characterization of the set of generators that is useful in counting them.

\begin{definition}
Suppose $x \in [0, \infty)^d$.
\smallskip

\noindent
(a)~The \emph{closed positive orthant generated} (or \emph{determined}) \emph{by~$x$} is the set
\[
O^+_x := \{y \in [0, \infty)^d: y \geq x\}.
\]
\ignore{
\noindent
(b)~The 
\marginal{Delete if not used.}
\emph{open negative orthant generated} (or \emph{determined}) \emph{by~$x$} is the set
\[
O^-_x := \{y \in [0, \infty)^d: y \prec x\}.
\]
}

\noindent
(b)~The minimum points of the frontier $F_n$ are called \emph{generators}.  We denote the set of generators at time~$n$ by $G_n$.
\end{definition}

\begin{remark}
(a) The record-setting region $\mbox{RS}_n$ equals the union $\cup_{g \in G_n} O_g^+$ of closed positive orthants.  The elements of $G_n$ are called generators because $\mbox{RS}_n$ is the up-set in 
$[0, \infty)^d$ generated by $G_n$ with respect to the partial order~$\leq$.

(b) The almost surely unique generator with minimum coordinate-sum is the trailing point $\tau_n$, just as the remaining record with maximum coordinate-sum is the leading point $\gl_n$.
\end{remark}

There are $11$ generators in \refF{fig:frontierdefs}, including the trailing point $\tau_n$ at the intersection of $F_n$ and the dotted hyperplane (line) marked with $x_+ = F_n^-$.  In terminology we shall establish shortly, $9$ of these are interior (\ie,\ $2$-dimensional) generators and $2$ of them are $1$-dimensional generators.
\medskip 

We now proceed to characterize the set of generators.

Denote the $\rho \equiv \rho_n$ current records at a given time~$n$ by
$r^{(1)}, \dots, r^{(\rho)}$ (listed here in arbitrary, but fixed, order).
\ignore{
\smallskip

{\bf Assumption:\ }When discussing the deterministic geometry of such points $r^{(1)}, \dots, r^{(\rho)}$, 
we will for simplicity assume (as is almost surely true throughout time in our records model) that there are no ties in any coordinate, that is, that
\begin{equation}
\label{no coord ties}
\mbox{For each $j \in [d]$, the $\rho$ values $r^{(i)}_j$, $i \in [\rho]$, are distinct.}
\end{equation}
We will also assume that $r^{(i)} \in (0, 1)^d$ for $i \in [\rho]$, and that these points are (pairwise) incomparable; because we assume~\eqref{no coord ties}, for ``incomparable'' here we don't need to specify whether the partial order is $\preceq$ or $\leq$.
\smallskip
}
The record-setting region $S \equiv \mbox{RS}_n$ is then the closed set
\ignore{
\begin{equation}
\label{RS}
S = \cap_{i = 1}^{\rho} [O^-(r^{(i)})]^c.
\end{equation}

Our development here begins by recalling~\eqref{RS}:
}
\begin{align}
S 
&= \cap_{i = 1}^{\rho} \left[ \cup_{k = 1}^d O^+\!\left( r^{(i)}_k e^{(k)} \right) \right]
= \cup_{k_1 = 1}^d \cdots \cup_{k_{\rho} = 1}^d \cap_{i = 1}^{\rho} O^+\!\left( r^{(i)}_{k_i} e^{(k_i)} \right) \nonumber \\
&= \cup_{k_1 = 1}^d \cdots \cup_{k_{\rho} = 1}^d O^+\!\left( \vee_{i = 1}^{\rho} r^{(i)}_{k_i} e^{(k_i)} \right) \nonumber \\
\label{r}
&= \cup_{k \in [d]^{[\rho]}}\,O^+\!\left( R^{(\Pi_1(k))}_1, \ldots, R^{(\Pi_d(k))}_d \right),
\end{align}
where for $j \in [d]$ and $k \in [d]^{[\rho]}$ we have defined the ordered partition
$\Pi(k) = (\Pi_1(k), \dots, \Pi_d(k))$ of $[\rho]$ by
\[
\Pi_j(k) := k^{-1}(\{j\}) = \{i \in [\rho]:k_i = j\},
\]
and for $j \in [d]$ and $P \subseteq [\rho]$ we have defined
\[
R_j^{(P)} := \vee_{i \in P}\,r^{(i)}_j.
\]

Therefore we have the neat representation
\begin{equation}
\label{rep}
S = \bigcup O^+\!\left( R_1^{(\Pi_1)}, \ldots, R_d^{(\Pi_d)} \right),
\end{equation}
where the union here is taken over all ordered partitions $\Pi = (\Pi_1, \dots, \Pi_d)$ of $[\rho]$ into~$d$ sets; each $\Pi_j$  is allowed to be empty, in which case $R^{(\Pi_j)}_j := 0$.
This shows immediately that every element of $G \equiv G_n$ has in each coordinate either~$0$ or the value of some record in that coordinate.

To simplify our characterization of generators, we begin by considering only ``interior'' generators.
For any point $x \in O^+_0$, let 
$\nu(x)$ denote the set of non-zero coordinates of~$x$, and observe that~$x$ lies in the interior of $O^+_0$ if and only if $\nu(x) = [d]$.  We call such a point~$x$ an \emph{interior} point.

Observe that a point~$x$ of the form $\left( R_1^{(\Pi_1)}, \ldots, R_d^{(\Pi_d)} \right)$ appearing in~\eqref{rep} is interior if and only if all the cells $\Pi$ of the partition are nonempty.  Next, note that $x \in (0, \infty)^d$ is of such a form if and only if there exist~$d$ distinct indices $i_1, \dots, i_d$ such that $x_j = r^{(i_j)}_j$ for $j \in [d]$.

We are now in position to state and prove a characterization of the set~$I$ of interior generators.  (Note that $I \subset G \subset S$.)

\begin{theorem}
\label{T:interior}
A point $g \in [0, \infty)^d$ belongs to~$I$ if and only if
\begin{enumerate}
\item[(i)] $g \in S$, and
\item[(ii)] there exist~$d$ distinct indices $i_1, \dots, i_d$ such that
\begin{equation}
\label{min}
g_j = r^{(i_j)}_j = \min\left\{ r^{(i_{\ell})}_j: \ell \in [d] \right\}\mbox{\ \rm for every $j \in [d]$}.
\end{equation}
\end{enumerate}
\end{theorem}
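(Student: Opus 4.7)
The plan is to reduce the theorem to a local criterion and then unpack that criterion via the explicit form of $S^c$. I would first establish the following: a point $g$ with all coordinates positive is an interior generator if and only if $g \in S$ and, for each $j \in [d]$, the perturbation $g - \epsilon e^{(j)}$ fails to lie in $S$ for all sufficiently small $\epsilon > 0$. The content of this criterion rests on the fact that $S$ is up-closed, since each orthant $O^+(R^\Pi)$ appearing in \eqref{rep} is up-closed. Were there a strictly smaller witness $y < g$ in $F_n$, one could pick $j$ with $y_j < g_j$ and form the point $y'$ obtained from $g$ by replacing its $j$-th coordinate by $y_j$; then $y' \geq y$ lies in $S$ by up-closedness, and a second application of up-closedness places $g - \epsilon e^{(j)} \geq y'$ in $S$ for every $\epsilon \in (0, g_j - y_j]$, contradicting the local hypothesis.

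The main body of the proof is then a translation of this local condition via $S^c = \bigcup_{i=1}^\rho \{x \in [0,\infty)^d : x \prec r^{(i)}\}$. For each $j$, the failure of $g - \epsilon e^{(j)}$ to lie in $S$ for small $\epsilon > 0$ supplies a record $r^{(i_j)}$ with $g - \epsilon e^{(j)} \prec r^{(i_j)}$ throughout some interval $\epsilon \in (0, \epsilon_0]$; letting $\epsilon \downarrow 0$ yields $g_j \leq r^{(i_j)}_j$ along with the strict bounds $g_k < r^{(i_j)}_k$ for $k \neq j$, and the complementary fact $g \in S$ (so $g \not\prec r^{(i_j)}$) sharpens the first inequality to the equality $g_j = r^{(i_j)}_j$. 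The indices $i_j$ are then necessarily distinct, since $i_j = i_k$ with $j \neq k$ would collide $g_k < r^{(i_j)}_k$ with $g_k = r^{(i_k)}_k = r^{(i_j)}_k$; and applying the strict bound with the roles of $j$ and $k$ swapped gives $r^{(i_\ell)}_j > g_j$ for every $\ell \neq j$, whence $g_j = \min_\ell r^{(i_\ell)}_j$. Interiority is automatic from $g_j = r^{(i_j)}_j > 0$, so this proves the forward direction.

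The converse runs in reverse. Given $g \in S$ and indices satisfying~(ii), the min condition at coordinate $k$ supplies $r^{(i_j)}_k \geq g_k$ for every $k \neq j$; distinctness $i_j \neq i_k$ combined with the almost-sure absence of coordinate ties among the records upgrades this to the strict inequality $r^{(i_j)}_k > g_k$, so $g - \epsilon e^{(j)} \prec r^{(i_j)}$ for small $\epsilon > 0$ and hence $g - \epsilon e^{(j)} \notin S$. The local criterion then identifies $g$ as an interior generator. The main conceptual obstacle is the up-closedness argument underpinning the local criterion; beyond that, the argument is careful but routine index-chasing.
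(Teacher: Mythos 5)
Your proof is correct, but the forward direction follows a genuinely different route from the paper's. The paper exploits the representation~\eqref{rep}: an interior generator is \emph{a priori} of the form $\bigl( R_1^{(\Pi_1)}, \ldots, R_d^{(\Pi_d)} \bigr)$ for an ordered partition with all cells nonempty, which immediately yields the distinct indices and the first equality in~\eqref{min}; only the minimality over $\ell$ requires work, and that is handled by an exchange argument (moving $i_\ell$ from cell $\Pi_\ell$ to cell $\Pi_j$ produces a strictly smaller point of $S$, contradicting minimality). You instead bypass~\eqref{rep} entirely: you reduce generator-hood to the local criterion that each one-coordinate perturbation $g - \eps e^{(j)}$ exits $S$ (justified by up-closedness of $S$, exactly as you argue), and then read off the witnesses $r^{(i_j)}$ from the covering of the complement of $S$ by the open shadows $\{x : x \prec r^{(i)}\}$, with a finiteness/pigeonhole step to get a single record working for all small $\eps$. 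Your approach is more self-contained — it needs only the definition of $S$ and its up-closedness, not the partition calculus — and it derives existence, the equality $g_j = r^{(i_j)}_j$, distinctness, and the min property in one uniform sweep; the cost is the extra (routine) compactness-in-$\eps$ argument and the implicit identification of minima of $F_n$ with minima of $S$, which the paper also uses silently. The converse directions are essentially identical in content: both show that a point strictly below $g$ in coordinate $j$ is $\prec r^{(i_j)}$ using the min condition together with the almost-sure absence of coordinate ties. (One trivial slip: in justifying the local criterion you speak of a smaller witness in $F_n$ where you mean in $S$; the argument is unaffected.)
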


\begin{proof}
First suppose $g \in I$.  Then~(i) is automatic from the definition of~$I$.  Moreover, we know from our earlier discussion that~(ii) holds for $g = \left( R_1^{(\Pi_1)}, \ldots, R_d^{(\Pi_d)} \right)$ with the possible exception of the second equality in~\eqref{min}.  But if that equality does not hold, let $j, \ell \in [d]$ with $j \neq \ell$ satisfy
\[
r^{(i_{\ell})}_j < R_j^{(\Pi_j)}.
\]
We then move $i_{\ell}$ from the cell $\Pi_{\ell}$ to the cell $\Pi_j$ in order to form a new partition, call it $\Pi'$.  Then
\[
g > \left( R_1^{(\Pi'_1)}, \ldots, R_d^{(\Pi'_d)} \right) \in S,
\]
so~$g$ is not a generator.

Next we prove the converse.
If~$g$ has these two properties, then $g \in (0, \infty)^d$ belongs to~$S$, so all that is left to show is that~$g$ is a minimum (with respect to~$\leq$) of~$S$.  Suppose that $x < g$;
we will complete the proof by showing that $x \not\in S$.

Let $j_0$ satisfy $x_{j_0} < g_{j_0}$.  Then
\begin{equation}
\label{j0}
x_{j_0} < g_{j_0} = r^{(i_{j_0})}_{j_0}
\end{equation}
using~\eqref{min} for the equality.  Additionally, for $j \neq j_0$ we have
\begin{equation}
\label{j}
x_j \leq g_j < r_j^{(i_{j_0})},
\end{equation}
where the second inequality holds by~\eqref{min} because
\[
g_j = r_j^{(i_j)} = \min\left\{ r^{(i_{\ell})}_j: \ell \in [d] \right\},
\]
which almost surely is strictly smaller than $r_j^{(i_{j_0})}$ 
because $i_j \neq i_{j_0}$.  Combining~\eqref{j0} and~\eqref{j}, we see that $x \prec r^{(i_{j_0})}$, and so $x \not\in S$.
\end{proof}

\begin{remark}
\refT{T:interior} gives an injection from the set of interior generators into the set of ordered $d$-tuples of remaining records.
\end{remark}

Now that we have characterized the interior generators, it is straightforward to characterize~$G$ in terms of projections of the current records to lower-dimensional coordinate subspaces, but some care must be taken to ensure that
the almost sure property of having no coordinate ties
remains true after projection.  To begin a careful description, given a subset $T = \{j_1, \dots, j_t\}$ of 
$[d]$ with $|T| = t \in [d]$ and $1 \leq j_1 < \cdots < j_t \leq d$, define the \emph{projection mapping} $\pi_T: \bbR^d \to \bbR^{t}$ by
\[
\pi_T(x_1, \dots, x_d) := (x_{j_1}, \dots, x_{j_t}),
\]
and define the \emph{injection mapping} $\iota_T: \bbR^t \to \bbR^d$ by
\[
\iota_T(x_1, \dots, x_t) := \vee_{k = 1}^t x_{j_k} e^{(j_k)},
\]
where $\vee$ denotes coordinate-wise maximum.
Recall that $\nu(x)$ denotes the set of nonzero coordinates of a point $x \in [0, \infty)^d$.  Define the set of \emph{$T$-generators} to be the set
\[
G_T := G \cap \{x:\,\nu(x) = T\}
\]
and observe that~$G$ is the disjoint union
\[
G = \cup_{T \subseteq [d]} G_T.
\]
This observation, together with a characterization of each $G_T$, thus provides a characterization of~$G$.  A characterization of each $G_T$ is obtained by combining the following theorem with \refT{T:interior}.

To set up the statement of the theorem, consider the image
\[
R_T := \pi_T(R) = \left\{ \pi_T(r^{(i)}): i \in [\rho] \right\} \subset \bbR^{|T|}
\]
under $\pi_T$ of the set $R := \left\{ r^{(i)}: i \in [\rho] \right\}$ of current records, and note that $R_T$ inherits the property
of ``no ties in any coordinate'' from~$R$.  Let $I_T$ denote the set of interior generators of $R_T$, and let $G'_T := \iota_T(I_T)$ denote the injection of $I_T$ into $\bbR^d$.

\begin{theorem}
\label{T:T}
For every $T \subseteq [d]$ we have $G_T = G'_T$.
\end{theorem}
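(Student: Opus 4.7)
My plan is to prove both inclusions $G'_T \subseteq G_T$ and $G_T \subseteq G'_T$, anchored on a single equivalence between membership in $S$ and membership in the lower-dimensional record-setting region $S_T$ determined by $R_T$, valid for points supported on $T$.

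The key lemma I would first establish is: for any $x \in [0, \infty)^d$ with $\nu(x) \subseteq T$, one has $x \in S$ if and only if $\pi_T(x) \in S_T$. This follows from the almost-sure positivity of each current record's coordinates: for $j \notin T$ the inequality $x_j < r^{(i)}_j$ is automatic, so the comparison $x \prec r^{(i)}$ collapses to $\pi_T(x) \prec \pi_T(r^{(i)})$. Restricted to the subspace $\{x : \nu(x) \subseteq T\}$, the maps $\iota_T$ and $\pi_T$ are mutually inverse bijections that preserve the partial order $\leq$.

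With this lemma in hand, both inclusions follow by the same template. For $G'_T \subseteq G_T$, given $h \in I_T$ I set $g := \iota_T(h)$; then $\nu(g) = T$ (because $h$ is interior in $\bbR^{|T|}$) and $g \in S$ (by the lemma). If some $x < g$ were to lie in $S$, then necessarily $\nu(x) \subseteq T$ (since $g_j = 0$ for $j \notin T$ forces $x_j = 0$ there), so $\pi_T(x) < h$ in $\bbR^{|T|}$ and $\pi_T(x) \in S_T$ by the lemma, contradicting that $h$ is a generator of $R_T$. For $G_T \subseteq G'_T$, given $g \in G_T$ I set $y := \pi_T(g)$; then $y$ has all positive coordinates and lies in $S_T$ by the lemma. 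If $y' < y$ were in $S_T$, then $\iota_T(y') < g$ in $\bbR^d$ and $\iota_T(y') \in S$ by the lemma, contradicting minimality of $g$; hence $y \in I_T$ and $g = \iota_T(y) \in G'_T$.

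The main obstacle is really just the key lemma: verifying that membership in $S$ for points with support in $T$ is equivalent to membership in the lower-dimensional $S_T$. The verification itself is brief, but it is precisely here that the almost-sure no-zero-coordinate property of records is used---without it, one could not freely drop the constraints arising from coordinates outside $T$. Once the equivalence is secured, order-theoretic minimality transfers cleanly between the two pictures, because $\iota_T$ and $\pi_T$ restrict to order-preserving mutual inverses on $\{x : \nu(x) \subseteq T\}$, so the proof reduces to the symmetric pair of contradiction arguments above.
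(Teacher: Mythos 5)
Your proposal is correct and follows essentially the same route as the paper: the heart of both arguments is the observation that, because current records almost surely have all coordinates positive, membership in $S$ for a point supported on $T$ is equivalent to membership in the record-setting region of the projected records $R_T$, after which minimality transfers through the order-preserving mutual inverses $\pi_T$ and $\iota_T$. The paper phrases this as a single biconditional chain while you split it into two inclusions, but the content is identical.
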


\begin{proof}
As above, let $t = |T|$.
There is no loss of generality (and there is some ease in notation) in supposing that $T = [t]$, and thus $x \in G_T$ if and only if $x \in G$ and $x_{t + 1} = \cdots = x_d = 0$.
Let $x = (x_1, \dots, x_t, 0, \dots, 0)$ satisfy $\nu(x) = t$.  We will show that $x \in G_T$---equivalently, that
$x \in G$---if and only if $\pi_T(x) \in I_T$---equivalently, that $x \in \iota_T(I_T) = G'_T$.

Indeed, for~$x$ to be a generator, there are two requirements: (i)~$x \in S$, and (ii)~$x$ is a minimum of~$S$.  The requirement~(i) is that for each~$i$ there should exist $j \in [d]$ such that $x_j \geq r^{(i)}_j$.  However, since we assume that $r^{(i)} \succ 0$, such~$j$ must belong to $[t]$.  We have thus argued that
$x$ is in $S = \mbox{RS}(R)$ (the record-setting region determined by the points in~$R$) if and only if 
$\pi_T(x) \in \mbox{RS}(R_T)$.

The requirement~(ii) is that $y < x$ must imply $y \notin S$.  But note that $y < x$ if and only if $y$ is of the form $y = (y_1, \dots, y_t, 0, \dots, 0)$ with $\pi_T(y) < \pi_T(x)$.  Thus requirement~(ii) can be rephrased thus:\ If $y = (y_1, \dots, y_t, 0, \dots, 0)$ with $\pi_T(y) < \pi_T(x)$, then $y \notin \mbox{RS}$---equivalently, by what we argued in connection with requirement~(i), that $\pi_T(y) \notin \mbox{RS}(R_T)$.

So we have argued that~$x$ is a generator if and only if $\pi_T(x) \in I_T$, \ie,\ if and only if $x \in G'_T$.  This is as desired.
\end{proof}

In light of \refT{T:T}, we call the number of nonzero coordinates of a generator its \emph{dimension}.
\refF{F:frontier_3d} shows the generators
of various dimensions for an example with $d=3.$

\begin{figure}[htb]
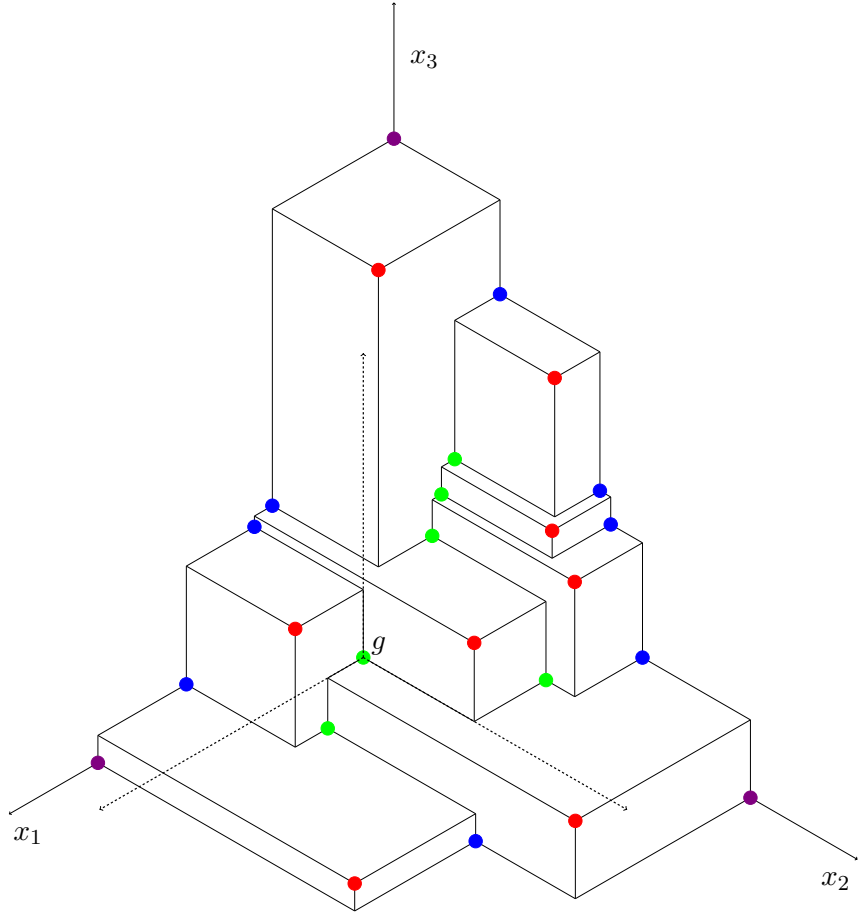

\include{figure1}
\caption{Example of a record frontier in dimension $d=3$ with $\rho=8$ remaining records shown in red and the resulting 
$\gamma = 17$ generators:\
three one-dimensional generators shown in violet, eight two-dimensional generators shown in blue, and six three-dimensional (interior) generators shown in green.
The lower boundary of one of the orthants $O^+_g$ is shown using dashed lines.}
\label{F:frontier_3d}
\end{figure}

\begin{example}
Suppose
$d = 4$ and the current records are $(2, 8, 3, 7)$ and $(5, 1, 4, 6)$.  Then $|G| = 8$, because $|G_T| = 1$ for precisely eight nonempty subsets~$T$ of $[4]$ and $|G_T| = 0$ otherwise.  The eight subsets~$T$ for which $|G_T| = 1$ are
\begin{align*}
G_{\{1\}} &= \{(5, 0, 0, 0)\};\,G_{\{2\}} = \{(0, 8, 0, 0)\};\,G_{\{3\}} = \{(0, 0, 4, 0)\}; \\
G_{\{4\}} &= \{(0, 0, 0, 7)\};\,
G_{\{1, 2\}} = \{(2, 1, 0, 0)\};\,G_{\{1, 4\}} = \{(2, 0, 0, 6)\}; \\
G_{\{2, 3\}} &= \{(0, 1, 3, 0)\};\,
G_{\{3, 4\}} = \{(0, 0, 3, 6)\}.
\end{align*}
Thus there are four one-dimensional generators, four two-dimensional generators, and no generators with dimension exceeding two.
\end{example}

\section{The expected number of generators}
\label{S:expected}

The proof of \refT{T:-}(c) requires a tight upper bound on the expected number of generators at time~$n$ with suitably small coordinate-sum.  In this section we warm up with a result of independent interest, giving an asymptotic approximation for 
the expected total number of generators at time~$n$.  We remark in passing that such an approximation proves useful in the analysis of the importance-sampling record-generating scheme described in \cite[Sections 2--4]{Fillgenerating(2018)}.  In this section, we abbreviate
$\dd x_1 \cdots \dd x_d$ as $\dd \xx$. 

\subsection{Exact expressions}
\label{S:exact}

Let $\gamma_{d, n}$ (respectively, $\iota_{d, n}$) denote the number of generators (resp.,\ interior generators) after a given number~$n$ of $d$-dimensional observations.  Our first result relates the expectations of these two quantities.

\begin{lemma}
\label{L:GI}
For integers $d \geq 0$ and $n \geq 0$, we have
\begin{equation}
\label{Gdn}
\E \gamma_{d, n} = \sum_{k = 0}^d {d \choose k} \E \iota_{k, n},
\end{equation}
\end{lemma}

\begin{proof}
This is immediate from \refT{T:T} and preceding discussion.
\end{proof}

In \refL{L:GI}, note that $\iota_{0, n} = \delta_{0, n}$: There is a single $0$-dimensional generator (namely, the origin in $\bbR^d$) if $n = 0$ and no $0$-dimensional generators otherwise.
Also note that $\iota_{d, n} = 0$ if $n < d$.

The next result gives an exact expression for $\E \iota_{d, n}$ for $n \geq d \geq 1$.
We write $n^{\underline{k}}$ for the falling factorial power
\[
n (n - 1) \cdots (n - k + 1) = k! \mbox{${n \choose k}$}.
\]

\begin{lemma}
\label{L:exact}
For integers $n \geq d \geq 1$, we have
\begin{equation}
\label{Idn}
\E \iota_{d, n}
= n\fall{d} \int_{(0, 1]^d}\,x_{\times}^{d - 1} (1 - x_{\times})^{n - d} \dd \xx.
\end{equation}
\end{lemma}

\begin{proof}
To facilitate the statement and proof of \refL{L:exact}, and in order to follow more closely the analogous treatment of remaining records in \cite[Section~2]{Bai(2005)}, we may and do switch from Exponential$(1)$ observation coordinates to observations uniformly distributed in $[0, 1)^d$.
Referring to \refT{T:interior}(ii), let us say that the $d$-tuple $(X^{(i_1)}, \ldots, $ $X^{(i_d)})$ of observations (where the indices $i_j$ are distinct elements of $[n]$) \emph{generates} an epoch-$n$ interior generator~$g$ if those~$d$ observations are all remaining records at epoch~$n$ and 
\[
g_j = X^{(i_j)}_j = \min\{X^{(i_{\ell})}_j: \ell \in [d]\}\mbox{\ for every $j \in d$}.
\]
Note that every interior generator is generated by precisely one such generating $d$-tuple.  Thus 
$\E \iota_{d, n}$ equals $n\fall{d}$ times the probability that $(X^{(1)}, \ldots, X^{(d)})$ generates an interior generator.  Condition on the value $\yy := (x^{(1)}, \ldots, x^{(d)})$ of this $d^2$-tuple.  According to \refT{T:interior}, in order for $\yy$ to generate an interior generator, two conditions are required.  One is that
\begin{equation}
\label{ellj}
x^{(\ell)}_j \geq x^{(j)}_j \mbox{\ for every $\ell, j \in d$ with $\ell \neq j$}.
\end{equation}
Let $x := \left( x^{(1)}_1, \ldots, x^{(d)}_d \right)$.
The other condition is that the remaining $n - d$ observations each need to fall outside $O^+_x$, guaranteeing the condition $x \in S$ required by \refT{T:interior}(i).

Therefore,
\[
\E \iota_{d, n}
= n\fall{d} \int_{\yy:\mbox{\scriptsize \eqref{ellj} holds}} \left[ 1 - \sprod (1 - x^{(j)}_j) \right]^{n - d} \dd \yy,
\]
a $d^2$-dimensional integral which reduces effortlessly to a $d$-dimensional integral:
\begin{align*}
\E \iota_{d, n}
&= n\fall{d} \int_{[0, 1)^d}\,\left[ \sprod (1 - x_j) \right]^{d - 1} \left[ 1 - \sprod (1 - x_j) \right]^{n - d} \dd \xx \\
&= n\fall{d} \int_{(0, 1]^d}\,x_{\times}^{d - 1} (1 - x_{\times})^{n - d} \dd \xx,
\end{align*}
as desired.
\end{proof}

\begin{remark}
(a)~The exact expression~\eqref{Idn} in \refL{L:exact} may be compared to a similar expression for 
$\E \rho_{d, n}$ derived in \cite[Section~2]{Bai(2005)}:\ For $d \geq 1$ and $n \geq 1$ we have
\begin{equation}
\label{rhodn}
\E \rho_{d, n} = n \int_{(0, 1]^d}\,(1 - x_{\times})^{n - 1} \dd \xx.
\end{equation}
In fact, by expanding the factor $x_{\times}^{d - 1}$ appearing in the integrand in~\eqref{Idn} as 
\[
[1 - (1 - x_{\times})]^{d - 1} = \sum_{j = 0}^{d - 1} (-1)^j \binom{d - 1}{j} (1 - x_{\times})^j,
\] 
one sees that the expected counts of interior generators and expected counts of remaining records are related by
\begin{align*}
\E \iota_{d, n}
&= n\fall{d} \sum_{j = 0}^{d - 1} (-1)^j \frac{\binom{d - 1}{j}}{n - d + j + 1} 
\E \rho_{d, n - d + j + 1}
\end{align*} \\
for $n \geq d \geq 1$.  But we do not know of any use for this connection.

(b)~An alternative expression to~\eqref{rhodn} is
\[
\E \rho_{d, n} = \sum_{j = 1}^n (-1)^{j - 1} \binom{n}{j} j^{ - (d - 1)} =: \widehat{H}^{(d - 1)}_n,
\]
a so-called \emph{Roman harmonic number} studied by \cite{Loeb(1989)}, \cite{Roman(1992)}, \cite{Sesma(2017)}.
\end{remark}

\subsection{Asymptotics}
\label{S:asymptotics}

From here we follow the same outline as for the expected number of remaining records in Bai et al.~\cite{Bai(2005)} to obtain an asymptotic expansion for $\E \iota_{d, n}$ (see our \refT{T:Gdnasymptotics}, the main result of \refS{S:expected}).  Accordingly, we begin by considering a Poissonized analogue of $\E \iota_{d, n}$.

\begin{lemma}
\label{L:Poissonized}
For integers $d \geq 1$ and $n \geq 0$, define
\[
\hat{\iota}_{d, n}
:= n^d \int_{[0, 1)^d} x_{\times}^{d - 1} \exp(- n x_{\times}) \dd \xx.
\]
Then, for fixed~$d$, as $n \to \infty$ we have
\[
\hat{\iota}_{d, n}
= (\L n)^{d - 1} \sum_{j = 0}^{d - 1} \frac{(-1)^j \Gamma^{(j)}(d)}{j! (d - 1 - j)!} (\L n)^{-j}
+ O((n \L n)^{d - 1} e^{-n}).
\]
\end{lemma}

\begin{proof}
We have the following simple derivation:
\begin{align*}
\hat{\iota}_{d, n}
&= \int_{[0, n^{1/d})^d} u_{\times}^{d - 1} \exp(- u_{\times}) \dd \uu
\mbox{\quad ($x_j \equiv n^{-1/d} u_j$)} \\
&= \int_{(-d^{-1} \L n, \infty)^d} \exp \left[ - d\,z_+ - e^{- z_+} \right] \dd \zz
\mbox{\quad ($u_j \equiv e^{- z_j}$)} \\
&= \frac{1}{(d - 1)!} \int_{- \L n}^{\infty}\!(\L n + x)^{d - 1} \exp\left( - d\,x - e^{-x} \right) \dd x
\mbox{\quad ($x = z_+$)} \\
&= \frac{1}{(d - 1)!} \int_0^n\!(\L n - \L y)^{d - 1} y^{d - 1} e^{-y} \dd y
\mbox{\quad ($x = - \L y$)} \\
&= \frac{(\L n)^{d - 1}}{(d - 1)!} \sum_{j = 0}^{d - 1} {{d - 1} \choose j} \frac{(-1)^j}{(\L n)^j}
\int_0^n\!(\L y)^j y^{d - 1} e^{-y} \dd y \\
&= (\L n)^{d - 1} \sum_{j = 0}^{d - 1} \frac{(-1)^j}{j! (d - 1 - j)!} (\L n)^{-j} \int_0^n\!(\L y)^j y^{d - 1} e^{-y} \dd y.\\
\end{align*}
But
\begin{align*}
 \int_0^n\!(\L y)^j y^{d - 1} e^{-y} \dd y
&=  \int_0^{\infty}\!(\L y)^j y^{d - 1} e^{-y} \dd y - \int_n^{\infty}\!(\L y)^j y^{d - 1} e^{-y} \dd y \\
&=  \Gamma^{(j)}(d) - \Theta((\L n)^j n^{d - 1} e^{-n}).
\end{align*}
The desired result now follows easily.
\end{proof}

We next bound the difference between $\hat{\iota}_{d, n}$ and
\begin{equation}
\label{Idntilde}
\tilde{\iota}_{d, n}
:= n^d \int_{[0, 1)^d} x_{\times}^{d - 1} (1 - x_{\times})^n \dd \xx.
\end{equation}

\begin{lemma}
\label{L:quadratic}
For fixed $d \geq 1$, as $n \to \infty$ we have
\[
0 \leq \hat{\iota}_{d, n} - \tilde{\iota}_{d, n} = O(n^{-1} (\L n)^{d - 1}).
\]
\end{lemma}

\begin{proof}
We utilize the elementary inequality
\[
e^{- n t} (1 - n t^2) \leq (1 - t)^n \leq e^{- n t}
\]
for $n \geq 1$ and $0 \leq t \leq 1$ (see~\cite[Lemma~5]{Bai(2001)}).  This yields
\[
0 \leq \hat{\iota}_{d, n} - \tilde{\iota}_{d, n}
\leq n^{d + 1} \int_{[0, 1)^d} x_{\times}^{d + 1} \exp(- n x_{\times}) \dd \xx.
\]
Proceeding just as in the proof of \refL{L:Poissonized}, we find that the last expression here is
$O(n^{-1} (\L n)^{d - 1})$.
\end{proof}

\begin{theorem}
\label{T:Idn}
For fixed $d \geq 1$, as $n \to \infty$ the expected number of interior generators at time~$n$ in dimension~$d$ satisfies
\[
\E \iota_{d, n}
= (\L n)^{d - 1} \sum_{j = 0}^{d - 1} \frac{(-1)^j \Gamma^{(j)}(d)}{j! (d - 1 - j)!} (\L n)^{-j} + O(n^{-1} (\L n)^{d - 1}).
\]
\end{theorem}

\begin{proof}
Comparing~\eqref{Idn} and~\eqref{Idntilde} and then invoking \refL{L:quadratic}, we see that
\begin{align*}
\E \iota_{d, n}
&= \frac{n\fall{d}}{(n - d)^d} \tilde{\iota}_{d, n - d}
= [1 + O(n^{-1})]\,\tilde{\iota}_{d, n - d} \\
&= [1 + O(n^{-1})]\,\left[ \hat{\iota}_{d, n - d} + O(n^{-1} (\L n)^{d - 1}) \right] \\
&= [1 + O(n^{-1})]\,\hat{\iota}_{d, n - d} + O(n^{-1} (\L n)^{d - 1}).
\end{align*}
But, according to \refL{L:Poissonized},
\begin{align*}
\hat{\iota}_{d, n - d}
&= [\L (n - d)]^{d - 1} \sum_{j = 0}^{d - 1} \frac{(-1)^j \Gamma^{(j)}(d)}{j! (d - 1 - j)!} [\L (n - d)]^{-j}
+ O((n \L n)^{d - 1} e^{-n}) \\
& = (\L n)^{d - 1} \sum_{j = 0}^{d - 1} \frac{(-1)^j \Gamma^{(j)}(d)}{j! (d - 1 - j)!} (\L n)^{-j}
+ O(n^{-1} (\L n)^{d - 2}).
\end{align*}
Thus
\begin{align*}
\E \iota_{d, n}
&= [1 + O(n^{-1})]\,(\L n)^{d - 1} \sum_{j = 0}^{d - 1} \frac{(-1)^j \Gamma^{(j)}(d)}{j! (d - 1 - j)!} (\L n)^{-j}
+ O(n^{-1} (\L n)^{d - 1}) \\
&= (\L n)^{d - 1} \sum_{j = 0}^{d - 1} \frac{(-1)^j \Gamma^{(j)}(d)}{j! (d - 1 - j)!} (\L n)^{-j}
+ O(n^{-1} (\L n)^{d - 1}),
\end{align*}
as claimed.
\end{proof}

Combining~\eqref{Gdn} and~\eqref{Idn}, we can obtain an exact expression for $\E \gamma_{d, n}$.  Similarly, combining~\eqref{Gdn} and \refT{T:Idn} we obtain the following asymptotic expansion in powers of logarithm for $\E \gamma_{d, n}$ after a little rearrangement.

\begin{theorem}
\label{T:Gdnasymptotics}
For fixed $d \geq 1$, as $n \to \infty$ the expected number of generators at time~$n$ in dimension~$d$ satisfies
\[
\E \gamma_{d, n}
= (\L n)^{d - 1} \sum_{j = 0}^{d - 1} a_{d, j} (\L n)^{-j} + O(n^{-1} (\L n)^{d - 1}),
\]
where
\[
a_{d, j}
:= \sum_{k = 0}^j {d \choose {d - j + k}} \frac{(-1)^{k} \Gamma^{(k)}(d - j + k)}{k! (d - 1 - j)!}.
\nopf
\]
\end{theorem}

\begin{remark}
\label{R:time}
(a)~In particular, $a_{d, 0} = 1$, so $\E \gamma_{d, n}$ has lead-order asymptotics
\[
\E \gamma_{d, n}
= (\L n)^{d - 1} + O((\L n)^{d - 2});
\]
this is $(d - 1)!$ times as large as the lead-order asymptotics for the expected number of remaining records, namely,
\[
\E \rho_{d, n} = \frac{(\L n)^{d - 1}}{(d - 1)!} + O((\L n)^{d - 2}).
\]

(b)~For $d = 2$ and $n \geq 0$, we have
\[
\E \gam_{2, n} = H_n + 1 = \E \rho_{2, n} + 1,
\]
where $H_n := \sum_{k = 1}^n k^{-1}$ is the $n$th harmonic number; and in fact it is easy to see that 
$\gam_{2, n} = \rho_{2, n} + 1$.  For $d = 3$ and $n \geq 0$, we have
\[
\E \gam_{3, n} = H_n^2  + H^{(2)}_n + 1 = 2 \E \rho_{3, n} + 1,
\]
where $H^{(2)}_n := \sum_{k = 1}^n k^{-2}$ is the $n$th second-order harmonic number; and in fact 
$\gam_{3, n} = 2 \rho_n + 1$, as established in \cite[Corollary~6.6]{Fillgenerating(2018)}.  There is not such a simple relationship between the exact values of $\rho_{d, n}$ and $\gam_{d, n}$ for $d \geq 4$; confer
 \cite[Remark~6.7]{Fillgenerating(2018)}.  

(c)~We hope to extend the work of this section by finding at least lead-order asymptotics for the variance, and also a normal approximation or other limit theorem, for the number $\gamma_{d, n}$ of generators after~$n$ observations.
\end{remark}

\section{Stochastic lower bound on $F^-_n$ via the first moment method}
\label{S:LB-}

In this section we show how to obtain a suitable stochastic lower bound on $F_n^-$.  See \refP{P:-lower} for the result.  The idea, for a suitably chosen sequence $(b_n)$ is to apply the first moment method (computation of sufficiently small mean, together with application of Markov's inequality) to the count
\[
\gamma_n(b) := (\mbox{number of generators at epoch~$n$ with coordinate-sum $\leq b$}).
\]
The bound on the mean of $\gamma_n(b_n)$ is obtained by 
suitably modifying the proof of \refT{T:Gdnasymptotics} [compare also the similar treatment of $\rho_n(b_n)$ in \refS{S:UBh-}].

\begin{lemma}
\label{L:UB mean hat}
With the notation and assumptions of~\eqref{bndefhat}--\eqref{gbdef} and~\eqref{Jjdef}, as $n \to \infty$ we have
\begin{equation}
\E \gam_n(b_n) 
\leq (1 + o(1)) \frac{(\L n)^{d - 1}}{(d - 1)!} (c_n \L_2 n)^{d - 1} (\L n)^{ - c_n}.
\end{equation}
\end{lemma}

\begin{proof}
We will be very brief here.  Following very closely along the lines of \refS{S:expected},
one finds that
\begin{align*}
\E \gamma_n(b_n) 
&\sim \frac{1}{(d - 1)!} \int_{n e^{-b_n}}^{\L n} (\L n - \L z)^{d - 1} z^{d - 1} e^{-z} \dd z \\
&\leq \frac{(\L n)^{d - 1}}{(d - 1)!} \int_{n e^{-b_n}}^{\infty} z^{d - 1} e^{-z} \dd z \\
&\sim \frac{(\L n)^{d - 1}}{(d - 1)!} (n e^{-b_n})^{d - 1} \exp(- n e^{-b_n}) \\
&= \frac{(\L n)^{d - 1}}{(d - 1)!} (c_n \L_2 n)^{d - 1} (\L n)^{ - c_n}.
\end{align*}
\end{proof}

We are now in position to utilize Markov's inequality.

\begin{proposition}[\underline{Stochastic lower bound on $F_n^-$}]
\label{P:-lower}
Fix $d \geq 2$.  If $1 \leq c_n = O(1)$ and
\[
b \equiv b_n := \L n - \L_3 n - \L c_n,
\]
then
\[
\P(F^-_n \leq b_n) 
\leq \E \gamma_n(b_n) 
\leq (1 + o(1)) \frac{(\L n)^{d - 1}}{(d - 1)!} (c_n \L_2 n)^{d - 1} (\L n)^{ - c_n}.
\]
\end{proposition}

\section{Proof of \refT{T:-}}
\label{S:proof-}
In this section we prove \refT{T:-}.

\begin{proof}[Proof of \refT{T:-}]
\ \vspace{-.1in}\\

(a)~This follows readily from Propositions~\ref{P:-lower} and \ref{P:-upper} (or one can invoke \refT{T:hF-} instead of \refP{P:-upper}).

(b)~As noted in \refR{R:compare}, this is immediate from \refT{T:hF-}(b), already established in \refS{S:proofh-}. 

(c)~This follows in the same fashion as our given proof of \refT{T:hF-}(c), now using \refP{P:-lower} in place of \refP{P:h-lower}.  We leave the routine details to the reader.
\ignore{
If
$(b_n)$ is (ultimately) monotone nondecreasing and $(n_j)$ is any strictly increasing sequence of positive integers, then
\[
\{F_n^- \leq b_n\mbox{\rm\ \io$(n)$}\} \subseteq \{F_{n_j}^- \leq b_{n_{j + 1}}\mbox{\rm\ \io$(j)$}\}.
\]
To complete the proof of part~(c), we choose $b_n \equiv \L n - \L_3 n - \L d - \L 2 - c$ with 
$c > 0$ and $n_j \equiv 2^j$, bound $\P(F_{n_j}^- \leq b_{n_{j + 1}})$ using \refP{P:-lower}, and apply the first Borel--Cantelli lemma.

Here are the details.  First note that
\[
b_{2 n} = \L(2 n) - \L_3(2 n) - \L d - \L 2 - c \leq \L n - \L_3 n - \L d - c, 
\] 
the bounding expression being the one in \refP{P:-lower} with $c_n \equiv e^c d$.  Thus, by that proposition,
\begin{align*}
\P(F_{n_j}^- \leq b_{n_{j + 1}})
&\leq \P(F_{n_j}^- \leq \L n_j - \L_3 n_j - \L d - c) \\
&= O((\L n_j)^{- [e^c d - (d - 1)]} (\L_2 n_j)^{d - 1}) \\
&= O(j^{- [1 + (e^c d - 1)]} (\L j)^{d - 1}), 
\end{align*}
which is summable.
}
\end{proof}

\appendix

\ccreset

\section{Proof of \refL{L:mean hat}}\label{A:meanproof}

This appendix is devoted to the (elementary) proof of \refL{L:mean hat}.

\begin{proof}[Proof of \refL{L:mean hat}]
We will prove~\eqref{meanhat} 
by separately considering (a)~upper and (b)~lower bounds.
Before beginning, we note that the mean in question has the exact expression
\begin{align}
\E \rho_n(b_n) 
&= n \int_{x \geq 0:\,x_+ \leq b_n}\!e^{- x_+} (1 - e^{- x_+})^{n - 1} \dd x \nonumber \\
&= \frac{n}{(d - 1)!} \int_0^{b_n}\!y^{d - 1} e^{-y} (1 - e^{-y})^{n - 1} \dd y. \label{exact mean}
\end{align}
A key technical tool we will use is the pair of elementary inequalities
\begin{equation}
\label{L5}
e^{- n t} (1 - n t^2) \leq (1 - t)^n \leq e^{- n t}
\end{equation}
for $n \geq 1$ and $0 \leq t \leq 1$ (see~\cite[Lemma~5]{Bai(2001)}).
Also, note from the definition~\eqref{Jjdef} of the function $J_j$ that
\begin{equation}
\label{Jjasy}
J_j(x) \sim (\L x)^j e^{-x}\mbox{\ as $x \to \infty$} 
\end{equation}
and that for $1 \leq x < y$ we have
\begin{equation}
\label{Jjdiff}
0 < J_j(x) - J_j(y) \leq (\L y)^j (e^{-x} - e^{-y}) = (\L y)^j e^{-y} (e^{y - x} - 1).
\end{equation}
\smallskip

(a)~Utilizing the upper bound in~\eqref{L5} immediately we derive
\begin{align}
\lefteqn{\E \rho_{n + 1}(b_{n + 1})} \nonumber \\
&= \frac{n + 1}{(d - 1)!} \int_0^{b_{n + 1}}\!y^{d - 1} e^{-y} (1 - e^{-y})^n \dd y \nonumber \\
&\leq \frac{n + 1}{(d - 1)!} \int_0^{b_{n + 1}}\!y^{d - 1} \exp\left( - n e^{-y} - y \right) \dd y \label{expression} \\
&= \frac{1 + n^{-1}}{(d - 1)!} \int_{n e^{-b_{n + 1}}}^n\!(\L n - \L z)^{d - 1} e^{-z} \dd z \nonumber \\
&\leq \frac{1 + n^{-1}}{(d - 1)!} \int_{n e^{-b_{n + 1}}}^{\infty}\![\L (n + 1) - \L z]^{d - 1} e^{-z} \dd z \nonumber \\
&= \frac{1 + n^{-1}}{(d - 1)!} \sum_{j = 0}^{d - 1} (-1)^j \binom{d - 1}{j} [\L (n + 1)]^{d - 1 - j} 
\int_{n e^{-b_{n + 1}}}^{\infty}\!(\L z)^j e^{-z} \dd z \nonumber \\
&\leq \frac{1 + n^{-1}}{(d - 1)!} \sum_{j = 0}^{d - 1} (-1)^j \binom{d - 1}{j} [\L(n + 1)]^{d - 1 - j}
J_j(n e^{ - b_{n + 1}}). \nonumber
\end{align}
That is,
\begin{equation}
\label{Erhonbnbound}
\E \rho_n(b_n)
\leq \frac{1 + (n - 1)^{-1}}{(d - 1)!} \sum_{j = 0}^{d - 1} (-1)^j \binom{d - 1}{j} (\L n)^{d - 1 - j}
J_j((n - 1) e^{ - b_n}). 
\end{equation}
By~\eqref{Jjdiff} we have
\begin{align}
0
&< J_j((n - 1) e^{-b_n}) - J_j(e^{\gb_n}) \nonumber \\ 
&\leq (\L \gb_n)^j e^{-\gb_n} [\exp(e^{ - b_n}) - 1] \nonumber \\
&\sim (\L \gb_n)^j e^{-\gb_n} e^{ - b_n} \label{Jdiffconsequence} \\
&= (\L \gb_n)^j e^{-\gb_n} n^{-1} c_n \L_2 n \nonumber \\
&= O\!\left( (\L \gb_n)^j e^{-\gb_n} n^{-1} \L_2 n \right). \nonumber
\end{align}
Thus
\begin{align*}
\lefteqn{\E \rho_n(b_n)} \\
&\leq \frac{1}{(d - 1)!} \sum_{j = 0}^{d - 1} (-1)^j \binom{d - 1}{j} (\L n)^{d - 1 - j}
J_j(\gb_n) + O(n^{-1} (\L n)^{d - 1 - c_n} \L_2 n) \\
&= [1 + O(n^{-1} \L_2 n)] \frac{1}{(d - 1)!} \sum_{j = 0}^{d - 1} (-1)^j \binom{d - 1}{j} (\L n)^{d - 1 - j} J_j(\gb_n),
\end{align*}
bettering the claim in the upper-bound direction for the mean at~\eqref{meanhat}.
\smallskip

(b)~Utilizing the lower bound in~\eqref{L5}, we find from~\eqref{exact mean} that
\begin{align}
\E \rho_n(b_n) 
&= \frac{n}{(d - 1)!} \int_0^{b_n}\!y^{d - 1} e^{-y} (1 - e^{-y})^{n - 1} \dd y \nonumber \\
&\geq \frac{n}{(d - 1)!} \int_0^{b_n}\!y^{d - 1} e^{-y} (1 - e^{-y})^n \dd y \nonumber \\
&\geq \frac{n}{(d - 1)!} \int_0^{b_n}\!y^{d - 1} \exp\left( - n e^{-y} - y \right) \left( 1 - n e^{ - 2 y} \right) \dd y. \label{mean LB}
\end{align}
We derive that the added term in~\eqref{mean LB} satisfies 
\begin{align*}
\lefteqn{\frac{n}{(d - 1)!} \int_0^{b_n}\!y^{d - 1} \exp\left( - n e^{-y} - y \right) \dd y} \\
&= \frac{1}{(d - 1)!} \int_{\gb_n}^n\!(\L n - \L z)^{d - 1} e^{-z} \dd z \\
&= \frac{1}{(d - 1)!} \sum_{j = 0}^{d - 1} (-1)^j \binom{d - 1}{j} (\L n)^{d - 1 - j} 
[J_j(\gb_n) - J_j(n)].
\end{align*}
But $J_j(n) \sim e^{-n} (\L n)^j$, whence it follows simply that the added term in~\eqref{mean LB} is lower-bounded by
\[
\frac{1}{(d - 1)!} \sum_{j = 0}^{d - 1} (-1)^j \binom{d - 1}{j} (\L n)^{d - 1 - j} J_j(\gb_n) - O(e^{ - n} (\L n)^{d - 1}).
\]
 
So it remains to show that the subtracted term in~\eqref{mean LB} can be absorbed into the remainder term in~\eqref{meanhatalt}, which we will do in similar (but easier) fashion to upper-bounding $\E \rho_n(b_n)$.
Indeed, the subtracted term satisfies
\begin{align}
0
&< \frac{n^2}{(d - 1)!} \int_0^{b_n}\!y^{d - 1} \exp\left( - n e^{-y} - 3 y \right) \dd y \label{subtracted} \\
&= \frac{n^{-1}}{(d - 1)!} \int_{\gb_n}^n\!z^2 (\L n - \L z)^{d - 1} e^{-z} \dd z \nonumber \\
&\leq \frac{n^{-1} (\L n)^{d - 1}}{(d - 1)!}\int_{\gb_n}^{\infty}\!z^2 e^{-z} \dd z \nonumber \\
&\sim \frac{n^{-1} (\L n)^{d - 1}}{(d - 1)!} \gb_n^2 e^{-\gb_n} \nonumber \\
&= \frac{n^{-1} (\L n)^{d - 1}}{(d - 1)!} c_n^2 (\L_2 n)^2 (\L n)^{ - c_n} \nonumber \\ 
&= O(n^{-1} (\L_2 n)^2 (\L n)^{d - 1 - c_n}) \nonumber \\
&= O\!\left( n^{-1} (\L_2 n)^2
\frac{1}{(d - 1)!} \sum_{j = 0}^{d - 1} (-1)^j \binom{d - 1}{j} (\L n)^{d - 1 - j} J_j(\gb_n) \right), \nonumber
\end{align}
as desired.
\end{proof}

\section{Proof of \refL{L:UB variance}}\label{A:varproof}

This appendix is devoted mainly to the proof of \refL{L:UB variance}.  Following that, in \refR{R:varlower} we establish the reverse asymptotic inequality.

\begin{proof}[Proof of \refL{L:UB variance}]
For convenience (to match up with the treatment of $\Var \rho_n$ in \cite[Section~2]{Bai(2005)}), at various points in the proof (including, notably, at the start) we switch from \iid\ Exponential$(1)$ observation coordinates to observations uniformly distributed in the cube $[0, 1]^d$ and record-\emph{small} values.  

We start with the second factorial moment of $\rho_{n + 2}(b_{n + 2})$. Let $B^{(i)}_n$ denote the event that $X^{(i)}$ is a remaining record at epoch~$n$.  Then
\begin{align}
M_{n + 2}
&:= \E\left[ \rho_{n + 2}(b_{n + 2})^2 \right] - \E \rho_{n + 2}(b_{n + 2}) \nonumber \\
&= (n + 2) (n + 1) 
\P\!\left( B^{(1)}_{n + 2} \cap B^{(2)}_{n + 2} \cap \bigcap_{i = 1}^2 \left\{ X^{(i)}_{\times} \geq e^{-b_{n + 2}} \right\} \right) \nonumber \\ 
&= (n + 2) (n + 1) \int^*\!\left[ 1 - x_{\times} - y_{\times} + (x \wedge y)_{\times} \right]^n \dd x \dd y \label{Mn+2}\\
&\leq (n + 2) (n + 1) \int^*\!\exp\left[- n (x_{\times} + y_{\times} - (x \wedge y)_{\times}) \right] \dd x \dd y \nonumber \\
&= K_n + I_n, \nonumber
\end{align}
with
\begin{equation}
\label{Kn}
K_n 
:= (n + 2) (n + 1) 
\int^*\!\exp\left[- n (x_{\times} + y_{\times}) \right] \dd x \dd y
\end{equation}
and
\begin{equation}
\label{In}
I_n := (n + 2) (n + 1) \int^*\!\exp\left[- n (x_{\times} + y_{\times}) \right] 
\left( \exp\left[ n (x \wedge y)_{\times} \right] - 1 \right) \dd x \dd y,
\end{equation}
where the integrals $\int^*$ are over $x, y \in [0, 1]^d$ incomparable in the dominance ordering and satisfying $x_{\times} \geq e^{-b_{n + 2}}$ and $y_\times \geq e^{-b_{n + 2}}$.

Continuing to follow the outline in \cite[Section~2]{Bai(2005)} for calculating $\Var \rho_n$, we observe
that $K_n$ is bounded above by the same expression without the incomparability restriction, which (switching back to Exponential coordinates) in turn equals $(n + 2) / (n + 1)$ times the square of
\begin{align}
\lefteqn{\hspace{-.5in}(n + 1) \int_{x \in [0, 1]^d:\,x_{\times} \geq e^{- b_{n + 2}}}\!\exp(- n x_{\times}) \dd x} 
\nonumber \\
&= (n + 1) \int_{x \geq 0:\,x_+ \leq b_{n + 2}}\!e^{- x_+} \exp(- n e^{- x_+}) \dd x \nonumber \\
&= \frac{n + 1}{(d - 1)!} \int_0^{b_{n + 2}}\!y^{d - 1} \exp\left( - n e^{-y} - y \right) \dd y; \label{expression2} 
\end{align}
this last expression~\eqref{expression2} agrees with~\eqref{expression} except that $b_{n + 1}$ there is replaced by $b_{n + 2}$.  By the same argument as follows after~\eqref{expression} in \refApp{A:meanproof}, \eqref{expression2} can be bounded above by
\begin{align}
\lefteqn{\hspace{-1in}[1 + O(n^{-1} \L_2 n)] 
\frac{1}{(d - 1)!} \sum_{j = 0}^{d - 1} (-1)^j \binom{d - 1}{j} [\L (n + 2)]^{d - 1 - j} J_j(\gb_{n + 2})} \nonumber \\
&= \left[1 + O\!\left( n^{-1} (\L_2 n)^2 \right) \right] \E \rho_{n + 2}(b_{n + 2}), \label{expression2bound}
\end{align}
where the equality here follows from \refL{L:mean hat}.  We conclude that $K_n$ is bounded above by
\begin{align*}
\lefteqn{\hspace{-.5in}\left[1 + O\!\left( n^{-1} (\L_2 n)^2 \right) \right] [\E \rho_{n + 2}(b_{n + 2})]^2} \\ 
&= [\E \rho_{n + 2}(b_{n + 2})]^2 + O(n^{-1} (\L n)^{2 (d - 1 - c_n)} (\L_2 n)^2). 
\end{align*}
Therefore, we have
\begin{align}
\Var \rho_{n + 2}(b_{n + 2}) 
&= \E \rho_{n + 2}(b_{n + 2}) + M_{n + 2} - [\E \rho_{n + 2}(b_{n + 2})]^2 \nonumber \\
&\leq \E \rho_{n + 2}(b_{n + 2}) + I_n + O(n^{-1} (\L n)^{2 (d - 1 - c_n)} (\L_2 n)^2) \nonumber \\
&= (1 + o(1)) \E \rho_{n + 2}(b_{n + 2}) + I_n. \label{varboundIn}
\end{align}
We remark in passing that one can see from~\eqref{expression2bound} and~\eqref{varboundIn} why we needed such detailed (lower-bound) asymptotics for $\E \rho_n(b_n)$ in order to get~\eqref{varboundIn}: We needed for the big-oh remainder term on the right in~\eqref{expression2bound} to have the property that its product with $\E \rho_n(b_n) = \Theta((\L n)^{d - 1 - c_n})$ be $o(1)$. 

We turn our attention now to $I_n = \sum_{k = 1}^{d - 1} \binom{d}{k} I_{n, k}$; here 
\begin{equation}
\label{Inkdef}
I_{n, k} := (n + 2) (n + 1) \int^{**}\!\exp[ - n u_{\times} (\Pi' v_i + \Pi'' v_i)]
\left( \exp\left[ n\,u_{\times} v_{\times} \right] - 1 \right) u_{\times} \dd \uu \dd \vv,
\end{equation}
where the integral 
$\int^{**}$
is over $\uu = (u_1, \ldots, u_d) \in [0, 1]^d$ and $\vv = (v_1, \ldots, v_d) \in [0, 1]^d$ satisfying $u_{\times} \Pi' v_i \geq e^{-b_{n + 2}}$ and $u_{\times} \Pi'' v_i \geq e^{-b_{n + 2}}$, with 
$\Pi' v_i := \prod_{i = 1}^k v_i$ and $\Pi' v_i := \prod_{i = k + 1}^d v_i$.  Continuing to follow the outline in \cite[Section~2]{Bai(2005)}, 
with $\tgb_n := n e^{ - b_{n + 2}} = \frac{n}{n + 2} \gb_{n + 2}$ and
$\int^{***}$ denoting integration over $(x, z, y)$ satisfying $x \in (0, 1)$, 
$z \in (0, 1)$, $y \in (0, n)$, $y x \geq \tgb_n$, and $y z \geq \tgb_n$, 
we find
\begin{align}
\lefteqn{[(1 + 2 n^{-1}) (1 + n^{-1})]^{-1} I_{n, k}} \nonumber \\
&= \frac{1}{(d - 1)!} \label{Inkalt} \\ 
&{} \times \int^{***}\!(\L n - \L y)^{d - 1} \frac{( - \L x)^{k - 1}}{(k - 1)!} 
\frac{( - \L z)^{d - 1 - k}}{(d - 1 - k)!} y e^{ - y (x + z)} (e^{y x z} - 1) \dd x \dd z \dd y. \nonumber
\end{align}
We can lower-bound $\L y$ in the first integrand factor by $\L \tgb_n = \Theta(\L_3 n)$
and thus
\begin{align}
\lefteqn{[(1 + 2 n^{-1}) (1 + n^{-1})]^{-1} I_{n, k}} \nonumber \\
&\leq \frac{(\L n)^{d - 1}}{(d - 1)! (k - 1)! (d - 1 - k)!} \nonumber \\ 
&{} \hspace{0.5in} \times \int_0^1\!\int_0^1\! [ - \L (x \wedge z)]^{d - 2}
\int\!y \left[ e^{ - y (x + z - x z)} - e^{ - y (x + z)} \right] \dd y \dd z \dd x \nonumber \\
&= \frac{2 (\L n)^{d - 1}}{(d - 1)! (k - 1)! (d - 1 - k)!} \nonumber \\ 
&{} \hspace{0.5in} \times \int_0^1\!\int_x^1\! ( - \L x)^{d - 2}
\int_{\tgb_n / x}^{\infty}\!y \left[ e^{ - y (x + z - x z)} - e^{ - y (x + z)} \right] \dd y \dd z \dd x; \label{Inkbound}
\end{align}
in the penultimate expression the $y$-integral is over $y \in (0, \infty)$ satisfying $y x > \tgb_n$ and 
$y z > \tgb_n$, and in the last expression the $y$-integral equals
\begin{align}
\lefteqn{(x + z - x z)^{-2} 
[1 + (x + z - x z) x^{-1} \tgb_n] \exp[- (x + z - x z) x^{-1} \tgb_n]} \nonumber \\
&{} \qquad {} - (x + z)^{-2} [1 + (x + z) x^{-1} \tgb_n] \exp[- (x + z) x^{-1} \tgb_n]. \label{zint} 
\end{align}
The integral over $z \in (x, 1)$ of~\eqref{zint} equals
\begin{align*}
D_n(x) 
&:= x^{-1} (1 - x)^{-1} (2 - x)^{-1} e^{ - (2 - x) \tgb_n} - \half x^{-1} e^{ - 2 \tgb_n} \\
&{} \qquad \qquad {} - (1 - x)^{-1} e^{ - x^{-1} \tgb_n} + (1 + x)^{-1} e^{- (x^{-1} + 1) \tgb_n}. 
\end{align*}
\ignore{
Therefore,
\begin{align*}
\lefteqn{I_{n, k}(b)} \\
&\leq 2 \frac{(\L n)^{d - 1}}{(d - 1)!} \int_0^1\!\int_x^1\! 
\frac{( - \L x)^{k - 1}}{(k - 1)!} \frac{( - \L z)^{d - 1 - k}}{(d - 1 - k)!} \\
&{} \times \left\{
(x + z - x z)^{-2} [1 + (x + z - x z) x^{-1} n e^{-b}] \exp[- (x + z - x z) x^{-1} n e^{-b}] \right. \\
&{} \qquad \left.{} - (x + z)^{-2} [1 + (x + z) x^{-1} n e^{-b}] \exp[- (x + z) x^{-1} n e^{-b}] \right\} \dd z \dd x 
\end{align*}
Bounding $( - \L z)^{d - 1 - k}$ from above by $( - \L x)^{d - 1 - k}$ in this last expression, we find 
\begin{align*}
\lefteqn{I_{n, k}(b)} \\
&\leq 2 \frac{(\L n)^{d - 1}}{(d - 1)!} \int_0^1\! 
\frac{( - \L x)^{d - 2}}{(k - 1)! (d - 1 - k)!} \\
&{} \times \int_x^1\!
\left\{ (x + z - x z)^{-2} [1 + (x + z - x z) x^{-1} n e^{-b}] \exp[- (x + z - x z) x^{-1} n e^{-b}] \right. \\
&{} \qquad \left.{} - (x + z)^{-2} [1 + (x + z) x^{-1} n e^{-b}] \exp[- (x + z) x^{-1} n e^{-b}] \right\} \dd z \dd x \\
&= 2 \frac{(\L n)^{d - 1}}{(d - 1)!} \int_0^1\! 
\frac{( - \L x)^{d - 2}}{(k - 1)! (d - 1 - k)!} \\
&{} \qquad \times
\left\{ x^{-1} (1 - x)^{-1} (2 - x)^{-1} e^{ - (2 - x) n e^{-b}} - \half x^{-1} e^{ - 2 n e^{-b}} \right. \\ 
&{} \qquad \qquad 
\left.{} - (1 - x)^{-1} e^{ - x^{-1} n e^{-b}} + (1 + x)^{-1} e^{- (x^{-1} + 1) n e^{-b}} \right\} \dd x.  
\end{align*}
}

For $0 < x \leq 1/2$ we use the bound
\[
D_n(x) \leq x^{-1} e^{ - (2 - x) \tgb_n} [(1 - x)^{-1} (2 - x)^{-1} - \half e^{ - x \tgb_n}] + e^{- 3 \tgb_n}.
\]
Noting
\begin{align*}
(1 - x)^{-1} (2 - x)^{-1} - \half e^{ - x \tgb_n}
&\leq (1 - x)^{-1} (2 - x)^{-1} - \half (1 - x \tgb_n) \\
&=  \half x [(1 - x)^{-1} (2 - x)^{-1} (3 - x) + \tgb_n], 
\end{align*}
we conclude
\begin{align*}
D_n(x) 
&\leq \half e^{ - (2 - x) \tgb_n} [\tgb_n + (1 - x)^{-1} (2 - x)^{-1} (3 - x)] + e^{ - 3 \tgb_n} \\
&\leq \half e^{ - (2 - x) \tgb_n} (\tgb_n + \tfrac{10}{3}) + e^{ - 3 \tgb_n} \\
&\leq (1 + o(1)) \half \tgb_n e^{ - (2 - x) \tgb_n} \\ 
&\leq (1 + o(1)) \half \tgb_n e^{ - \tfrac32 \tgb_n} 
\end{align*}
uniformly for $0 < x \leq 1/2$.

For $1/2 \leq x < 1$ we use the bound
\[
D_n(x) \leq (1 - x)^{-1} e^{ - (2 - x) \tgb_n} [x^{-1} (2 - x)^{-1} - e^{ - x^{-1} (1 - x)^2 \tgb_n}] 
+ \tfrac23 e^{- 2 \tgb_n}.
\]
Noting
\begin{align*}
x^{-1} (2 - x)^{-1} - e^{ - x^{-1} (1 - x)^2 \tgb_n}
&\leq x^{-1} (2 - x)^{-1} - [1 -  x^{-1} (1 - x)^2 \tgb_n] \\
&= x^{-1} (1 - x)^2 [(2 - x)^{-1} + \tgb_n],
\end{align*}
we conclude
\begin{align*}
D_n(x)
&\leq x^{-1} (1 - x) e^{ - (2 - x) \tgb_n} [\tgb_n + (2 - x)^{-1}] + \tfrac23 e^{- 2 \tgb_n} \\
&\leq 2 (1 - x) e^{ - (2 - x) \tgb_n} (\tgb_n + 1) + \tfrac23 e^{- 2 \tgb_n} \\
&\leq (1 + o(1))\,2 (1 - x) \tgb_n e^{ - (2 - x) \tgb_n} + \tfrac23 e^{ - 2 \tgb_n}
\end{align*} 
uniformly for $1/2 \leq x < 1$.

Therefore, for fixed~$d$ and~$k$ we have
\[
(\L n)^{- (d - 1)} I_{n, k} = O\!\left( \tgb_n e^{ - \tfrac32 \tgb_n} \right) + O(\tgb_n J_{n, k}), 
\]
where
\[
J_{n, k} := \int_{1/2}^1\!( - \L x)^{d - 2} (1 - x) e^{ - (2 - x) \tgb_n} \dd x.
\]
For any given $\eps > 0$, we can choose $\gd > 0$ so that 
$- \L x \leq (1 + \eps) (1 - x)$ for $x \in [1 - \gd, 1]$; note that $\gd$ does not depend on~$n$.  Then
\begin{align*}
J_{n, k}
&\leq \tfrac18 (\L 2)^{d - 2} e^{ - (1 + \gd) \tgb_n} 
+ (1 + \eps)^{d - 2} e^{ - \tgb_n} \int_{1 - \gd}^1\!(1 - x)^{d - 1} e^{ - (1 - x) \tgb_n} \dd x \\ 
&\leq \tfrac18 (\L 2)^{d - 2} e^{ - (1 + \gd) \tgb_n} 
+ (1 + \eps)^{d - 2} e^{ - \tgb_n} \int_0^\infty\!y^{d - 1} e^{ - y \tgb_n} \dd y \\
&= \tfrac18 (\L 2)^{d - 2} e^{ - (1 + \gd) \tgb_n} 
+ (1 + \eps)^{d - 2} (d - 1)! \tgb_n^{-d} e^{ - \tgb_n}. 
\end{align*}
It follows that
\[
J_{n, k} \leq (1 + o(1))\,(d - 1)! \tgb_n^{-d} e^{ - \tgb_n}
\]
and hence that
\begin{align}
I_{n, k} 
&= O\!\left( (\L n)^{d - 1} \tgb_n e^{ - \tfrac32 \tgb_n} \right) + O((\L n)^{d - 1} \tgb_n J_{n, k}) \nonumber \\ 
&= O\!\left( (\L n)^{d - 1} \tgb_n^{ - (d - 1)} e^{ - \tgb_n} \right)
= O\!\left( (\L_2 n)^{-2} [\L(n + 2)]^{d - 1 - c_{n + 2}} \right) \nonumber \\
&= O\!\left( (\L_2 n)^{-2} \E \rho_{n + 2}(b_{n + 2}) \right) \nonumber \\
&= o(\E \rho_{n + 2}(b_{n + 2})). \label{Ink}  
\end{align}

Combining~\eqref{varboundIn} with~\eqref{Ink}, we finally find that
\begin{align*}
\Var \rho_n(b_n)
&\leq (1 + o(1))\,\E \rho_n(b_n),
\end{align*}
as claimed.
\end{proof}

\begin{remark}
\label{R:varlower}
Application of the second-moment method in \refS{S:UBh-} requires only the upper bound on 
$\Var \rho_n(b_n)$ in \refL{L:UB variance}.  However, we find it interesting that, with the notation and assumptions of~\eqref{bndefhat}, as $n \to \infty$ the reverse asymptotic inequality
\begin{equation}
\label{varLB}
\Var \rho_n(b_n) \geq (1 + o(1)) \E \rho_n(b_n)
\end{equation}
also holds.  This suggests a Poisson approximation for the distribution of $\rho_n(b_n)$; see \refR{R:rholimit} for further comments.

\begin{proof}[Proof of~\eqref{varLB}]
Recall from~\eqref{Mn+2} that
\begin{align*}
M_{n + 2}
&= \E [\rho_{n + 2}(b_{n + 2})^2] - \E \rho_{n + 2}(b_{n + 2}) \\
&= (n + 2) (n + 1) \int^*\!\left[ 1 - x_{\times} - y_{\times} + (x \wedge y)_{\times} \right]^n \dd x \dd y. 
\end{align*}
From~\eqref{L5} we then have
\[
M_{n + 2} 
\geq K_n + I_n - \tK_n - \tI_n
\geq K_n - \tK_n - \tI_n;
\]
here $K_n$ and $I_n$ are defined at \eqref{Kn}--\eqref{In}, and
\begin{align}
\tK_n 
&:= (n + 2) (n + 1) n
\int^*\!\left[ x_{\times} + y_{\times} - (x \wedge y)_{\times} \right]^2
\exp\left[- n (x_{\times} + y_{\times}) \right] \dd x \dd y \nonumber \\
&\leq (n + 2) (n + 1) n
\int^*\!\left( x_{\times} + y_{\times} \right)^2
\exp\left[- n (x_{\times} + y_{\times}) \right] \dd x \dd y \nonumber \\
&= 2 (n + 2) (n + 1) n
\int^*\!x_{\times}^2
\exp\left[- n (x_{\times} + y_{\times}) \right] \dd x \dd y \label{tKnbound}
\end{align}
and (similarly)
\begin{align}
\tI_n 
&:= (n + 2) (n + 1) n \nonumber \\ 
&{} \hspace{-.1in}\times \int^*\!\left[ x_{\times} + y_{\times} - (x \wedge y)_{\times} \right]^2
\exp\left[- n (x_{\times} + y_{\times}) \right] 
\left( \exp\left[ n (x \wedge y)_{\times} \right] - 1 \right) \dd x \dd y \nonumber \\
&\leq 2 (n + 2) (n + 1) n \int^*\!x_{\times}^2
\exp\left[- n (x_{\times} + y_{\times}) \right] 
\left( \exp\left[ n (x \wedge y)_{\times} \right] - 1 \right) \dd x \dd y, \label{tInbound}
\end{align}
where once again the integrals $\int^*$ are over $x, y \in [0, 1]^d$ incomparable in the dominance ordering and satisfying $x_{\times} \geq e^{-b_{n + 2}}$ and $y_\times \geq e^{-b_{n + 2}}$.  To complete the proof of~\eqref{varLB} we will show that
\begin{align}
\tK_n &= o(\E \rho_{n + 2}(b_{n + 2})), \label{tKno} \\
\tI_n &= o(\E \rho_{n + 2}(b_{n + 2})), \label{tIno} \\
K_n &\geq [\E \rho_{n + 2}(b_{n + 2})]^2 - o(\E \rho_{n + 2}(b_{n + 2})). \label{Knlowerbound}
\end{align}

To prove~\eqref{tKno}, we observe that $\tK_n$ is bounded above by the same expression as~\eqref{tKnbound} without the incomparability restriction; thus (switching back to Exponential coordinates),
\begin{align}
\tK_n
&\leq 2 (n + 2) (n + 1) n \int_{x \in [0, 1]^d:\,x_{\times} \geq e^{- b_{n + 2}}}\,
x_{\times}^2 \exp(- n x_{\times}) \dd x \nonumber \\
&{} \qquad \qquad \qquad \qquad \qquad \times
\int_{y \in [0, 1]^d:\,x_{\times} \geq e^{- b_{n + 2}}}\,\exp(- n y_{\times}) \dd y \nonumber \\
&= 2 (n + 2) n \int_{x \geq 0:\,x_+ \leq b_{n + 2}}\!e^{- 3 x_+} \exp(- n e^{- x_+}) \dd x \nonumber \\
&{} \qquad \qquad \times
(n + 1) \int_{x \geq 0:\,x_+ \leq b_{n + 2}}\!e^{- x_+} \exp(- n e^{- x_+}) \dd x \nonumber \\
&= \frac{2 (n + 2) n}{(d - 1)!} \int_0^{b_{n + 2}}\!y^{d - 1} \exp\left( - n e^{-y} - 3 y \right) \dd y \label{line1}\\
&{} \qquad \qquad \times
\frac{n + 1}{(d - 1)!} \int_0^{b_{n + 2}}\!y^{d - 1} \exp\left( - n e^{-y} - y \right) \dd y. \label{line2}
\end{align}
The factor~\eqref{line2} agrees with~\eqref{expression2} and so by now-familiar arguments is asymptotically equivalent to 
$\E \rho_{n + 2}(b_{n + 2})$, and the factor~\eqref{line1} equals $2 (n + 2) / n$ times the expression at~\eqref{subtracted}, except that $b_n$ there is replaced by $b_{n + 2}$, and so by arguments that are also now familiar the first-line factor is 
$O\!\left( n^{-1} (\L_2 n)^2 (\L n)^{d - 1 - c_n} \right) = o(1)$.  We have established~\eqref{tKno}.

To prove~\eqref{tIno}, we again drop the incomparability restriction and proceed for $\tI_n$ as we did for $I_n$ in the proof of \refL{L:UB variance}.  This leads to $\tI_n = \sum_{k = 1}^{d - 1} \tI_{n, k}$, where [with notation just as in the proof of \refL{L:UB variance}, compare~\eqref{Inkdef}--\eqref{Inkbound}]
\begin{align*}
\lefteqn{[(1 + 2 n^{-1}) (1 + n^{-1})]^{-1} \tI_{n, k}} \\ 
&:= \frac{2 n^3}{(d - 1)!} \\ 
&{} \qquad \times \int^{**}\!\exp[ - n u_{\times} (\Pi' v_i + \Pi'' v_i)]
\left( \exp\left[ n\,u_{\times} v_{\times} \right] - 1 \right) u_{\times}^3 (\Pi' v_i)^2 \dd \uu \dd \vv \\
&= \frac{2 n^{-1}}{(d - 1)!} \\ 
&{} \times \int^{***}\!(\L n - \L y)^{d - 1} \frac{( - \L x)^{k - 1}}{(k - 1)!} 
\frac{( - \L z)^{d - 1 - k}}{(d - 1 - k)!} x^2 y^3 e^{ - y (x + z)} (e^{y x z} - 1) \dd x \dd z \dd y \\
&\leq \frac{4 n^{-1} (\L n)^{d - 1}}{(d - 1)! (k - 1)! (d - 1 - k)!} \\ 
&{} \hspace{0.5in} \times \int_0^1\!\int_x^1\! x^2 ( - \L x)^{d - 2}
\int_{\tgb_n / x}^{\infty}\!y^3 \left[ e^{ - y (x + z - x z)} - e^{ - y (x + z)} \right] \dd y \dd z \dd x \\
& = O\!\left( n^{-1} (\L n)^{d - 1} \right) = o(\E \rho_{n + 2}(b_{n + 2})),
\end{align*}
because the last triple integral, with the lower limit of integration $\tgb_n / x$ on~$y$ replaced by~$0$, is easily shown to converge.  [We could even use the dominated convergence theorem to conclude that $\tI_n = o\!\left( n^{-1} (\L n)^{d - 1} \right)$.]  We have established~\eqref{tIno}.

It remains to prove~\eqref{Knlowerbound}.  For that we write $K_n$ of \eqref{Kn} as the difference 
$\hK_n - (1 + 2 n^{-1}) (1 + n^{-1}) I_{n, 0}$, where $\hK_n$ is obtained from~\eqref{Kn} by extending the region of integration to \emph{all} $x, y \in [0, 1]^d$, and 
\begin{equation}
\label{In0def}
I_{n, 0} := 2 n^2 \int\!\exp[ - n (x_{\times} + y_{\times})] \dd x \dd y,
\end{equation}
where the integral in~\eqref{In0def} is over $x, y \in [0, 1]^d$ satisfying $y < x$ and 
$y_{\times} \geq e^{ - b_{n + 2}}$.  We will prove that 
(i) $\hK_n \geq [\E \rho_{n + 2}(b_{n + 2})]^2 - o(\E \rho_{n + 2}(b_{n + 2}))$ and 
(ii) $I_{n, 0} = o(\E \rho_{n + 2}(b_{n + 2}))$; that will establish~\eqref{Knlowerbound} and complete the proof of~\eqref{varLB}.

The proof of~(i) is easy.  Indeed, it follows using~\eqref{L5} that, as desired,
\[
\hK_n 
\geq \frac{n + 1}{n + 2} [\E \rho_{n + 2}(b_{n + 2})]^2 
= [\E \rho_{n + 2}(b_{n + 2})]^2 - o(\E \rho_{n + 2}(b_{n + 2})). 
\]

Finally, we prove~(ii).  By now-familiar changes of variables, we find
\begin{align*}
I_{n, 0} 
&= \frac{2}{[(d - 1)!]^2} \int_0^n\!\int_0^1\!{\bf 1}(y z \geq \tgb_n) 
(\L n - \L y)^{d - 1} ( - \L z)^{d - 1} y e^{- y (1 + z)} \dd z \dd y \\
&\leq \frac{2 (\L n)^{d - 1}e^{ - \tgb_n}}{[(d - 1)!]^2} 
\int_0^{\infty}\!\int_0^1\!{\bf 1}(y z \geq \tgb_n) ( - \L z)^{d - 1} y e^{- y} \dd z \dd y,
\end{align*}
where we remind the reader that $\tgb_n$ is defined as in the proof of \refL{L:UB variance} as $\tgb_n = n e^{ - b_{n + 2}} = \frac{n}{n + 2} \beta_{n + 2}$, and hence 
\[
e^{ - \tgb_n} 
= e^{ - \gb_{n + 2}} e^{ - 2 \gb_{n + 2} / (n + 2)} 
= (1 + o(1)) e^{ - \gb_{n + 2}}.
\]
Therefore,
\begin{equation}
\label{In0bound}
I_{n, 0}
\leq (1 + o(1)) \frac{2 \E \rho_{n + 2}(b_{n + 2})}{(d - 1)!} 
\int_0^{\infty}\!\int_0^1\!{\bf 1}(y z \geq \tgb_n) ( - \L z)^{d - 1} y e^{- y} \dd z \dd y. 
\end{equation}
But
\[
\int_0^{\infty}\!\int_0^1\!( - \L z)^{d - 1} y e^{- y} \dd z \dd y = \Gamma(d) < \infty, 
\]
so [since $\tgb_n \sim \gb_n = c_n \L_2 n = \Theta(\L_2 n) \to \infty$] it follows from the dominated convergence theorem that the double integral in~\eqref{In0bound} tends to~$0$ as $n \to \infty$, and hence that $I_{n, 0} = o(\E \rho_{n + 2}(b_{n + 2}))$, as claimed.
\end{proof}
\end{remark}
%

\bibliography{records}
\bibliographystyle{plain}

\end{document}